\documentclass{article}

\usepackage{geometry}                
\geometry{letterpaper}                   
\usepackage{graphicx}
\usepackage{amssymb}
\usepackage{epstopdf}
\usepackage{pgf,tikz}
\usepackage{amsmath}
\usepackage{tikz}
\usepackage{graphicx}
\usepackage{mathtools}
\usepackage{graphicx}
\usepackage{epstopdf}
\usepackage{fix-cm}
\usepackage{geometry}                
\geometry{letterpaper}                   
\usepackage{graphicx}
\usepackage{amssymb}
\usepackage{epstopdf}
\usepackage{pgf,tikz}
\usepackage{latexsym}
\usepackage{amsmath,amsfonts}
\usepackage{graphicx}
\usepackage{geometry}
\usepackage{graphicx}
\usepackage{multirow}
\usepackage{amssymb}
\usepackage{epstopdf}
\usepackage{pgf,tikz}
\usepackage{latexsym}
\usepackage[english]{babel}
\usepackage{dcolumn}
\usepackage{color}

\usepackage{multirow}
\usepackage{amsmath,amsfonts,amsthm}
\usepackage{mathtools,url,tikz}
\usepackage{graphics}
\usepackage{graphicx}

\newtheorem{theorem}{Theorem}[section]

\newtheorem{corollary}[theorem]{Corollary}

\newtheorem{lemma}[theorem]{Lemma}

\newcommand{\ignore}[1]{}
\newcommand{\bk}{\mathbf{k}}
\newcommand{\bh}{\mathbf{h}}
\newcommand{\uuf}{f}

\newcommand{\bK}{{\mathbf K}}
\newcommand{\oR}{{\mathbb R}}

\newcommand{\oN}{{\mathbb N}}

\begin{document}

\title{Worst-case examples for Lasserre's measure--based hierarchy for polynomial optimization {on the hypercube}}

\author{Etienne de Klerk \thanks{Tilburg University and Delft University of Technology, \texttt{E.deKlerk@uvt.nl}}
 \and Monique Laurent \thanks{Centrum Wiskunde \& Informatica (CWI), Amsterdam and Tilburg University, \texttt{monique@cwi.nl}}}

\maketitle

\begin{abstract}
We study the convergence rate of a  hierarchy of  upper bounds for polynomial optimization problems,
proposed by Lasserre [{\em SIAM J. Optim.} $21(3)$ $(2011)$, pp. $864-885$], and a related hierarchy by De Klerk, Hess and Laurent
[{\em SIAM J. Optim.} $27(1)$, $(2017)$ pp. $347-367$].
{For polynomial optimization over the hypercube we show a refined convergence analysis for the first hierarchy. We also show lower bounds on the convergence rate for both hierarchies on a class of examples. These lower bounds match the upper bounds and thus establish the true rate of convergence on these examples.}
Interestingly, 
{these convergence rates} are determined by the distribution of extremal zeroes
of certain families of orthogonal polynomials.
\end{abstract}

\noindent
{\bf Keywords:} {Polynomial optimization; Semidefinite optimization;  Lasserre hierarchy; extremal roots of orthogonal polynomials; Jacobi polynomials} \\
{\bf AMS classification:} {90C22; 90C26; 90C30} \\

\section{Introduction }\label{secintro}

We consider the problem of minimizing a polynomial $f:\oR^n\to\oR$ over a compact set  $\mathbf{K}\subseteq \oR^n$.
That is, we consider the problem of computing the parameter:

\begin{equation*}\label{fmink}
f_{\min,\mathbf{K}}:= \min_{x\in \mathbf{K}}f(x).
\end{equation*}
 We recall the following reformulation for $f_{\min,\mathbf{K}}$, established by Lasserre \cite{Las11}:

\begin{equation*}\label{fminkreform2}
f_{\min,\mathbf{K}}=\inf_{\sigma\in\Sigma[x]}\int_{\mathbf{K}}\sigma(x)f(x)d\mu(x) \ \ \mbox{s.t. $\int_{\mathbf{K}}\sigma(x)d\mu(x)=1$,}
\end{equation*}
where $\Sigma[x]$ denotes the set of sums of squares of polynomials, and $\mu$ is a signed Borel measure supported on $\mathbf{K}$.
\smallskip
\noindent
Given an integer $d\in \oN$, by bounding the degree of the polynomial $\sigma\in \Sigma[x]$ by $2d$, Lasserre \cite{Las11} defined  the parameter:

\begin{eqnarray}\label{fundr}
\underline{f}^{(d)}_{\mathbf{K}}:=\inf_{\sigma\in\Sigma[x]_d}\int_{\mathbf{K}}\sigma(x)f(x)d\mu(x) \ \ \mbox{s.t. $\int_{\mathbf{K}}\sigma(x)d\mu(x)=1$,}
\end{eqnarray}
where $\Sigma[x]_d$ consists of the polynomials in $\Sigma[x]$ 
with degree at most $2d$.

The inequality  \smash{$f_{\min,\mathbf{K}}\le\underline{f}^{(d)}_{\mathbf{K}}$} holds for all $d\in\oN$ and, in view  of the identity   (\ref{fminkreform2}), it follows that the sequence \smash{$\underline{f}^{(d)}_{\mathbf{K}}$} converges to $f_{\min,\mathbf{K}}$ as $d\rightarrow \infty$.
De Klerk and  Laurent \cite{DKL MOR}
established the following rate of convergence for the sequence
$\underline{f}^{(d)}_{\mathbf{K}}$, when $\mu$ is  the Lebesgue measure and $\bK$ is a convex body.

\begin{theorem}\cite{DKL MOR}
 \label{thm:dKLS}
Let $f\in \oR[x]$,  $\mathbf{K}$ a convex body, and $\mu$ the Lebesgue measure on $\mathbf{K}$. There exist constants $C_{f,\bK}$ (depending only on $f$ and $\bK$) and $d_\bK\in \oN$ (depending only on $\bK$) such that
\begin{equation}\label{thmmaineq2}
\underline{f}^{(d)}_{\mathbf{K}}-f_{\min,\mathbf{K}} \le {C_{f,\bK} \over  {d}}\ \ \text{ for all } d\ge d_\bK.
\end{equation}
 That is, the  following asymptotic convergence rate holds: $\underline{f}^{(d)}_{\mathbf{K}}-f_{\min,\mathbf{K}} = O\left( {1\over  d}\right).$
\end{theorem}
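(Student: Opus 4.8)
The plan is to exhibit, for each large $d$, an explicit feasible sum-of-squares density $\sigma$ of degree at most $2d$ for the problem (\ref{fundr}) whose associated expectation $\int_{\bK}\sigma f\, d\mu$ lies within $O(1/d)$ of $f_{\min,\bK}$. A natural choice is a density concentrated near a global minimizer $x^*\in\bK$ of $f$. The idea is to take $\sigma$ proportional to the square of a polynomial that peaks sharply at $x^*$; the canonical candidate is a rescaled Chebyshev-type or Jacobi-type kernel, or more elementarily a power of a quadratic bump. Concretely, writing (after translating) $x^*$ at the origin, one can try $\sigma_d(x) \sim \bigl(1 - \|x\|^2/R^2\bigr)^{2k}$ suitably truncated, with $k \asymp d$ and $R$ a fixed radius such that a ball of radius $R$ around $x^*$ meets $\bK$ in a set of positive measure; after normalizing so that $\int_\bK \sigma_d\, d\mu = 1$, one estimates $\int_\bK \sigma_d(x) f(x)\, d\mu(x) - f(x^*)$.

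The key steps, in order: (i) Reduce to the case $f_{\min,\bK}=0$ and $x^*=0$, and use that $f$ is Lipschitz on the compact set $\bK$, say $|f(x)-f(0)| \le L\|x\|$ on $\bK$ — or, slightly better, use the Taylor expansion of $f$ at $x^*$; since $x^*$ is a minimizer, if it is interior the linear term vanishes and one even gets $|f(x)| \le M\|x\|^2$, but to handle boundary minimizers one should keep the Lipschitz bound $|f(x)| \le L\|x\|$. (ii) Bound the normalizing constant $\int_\bK \sigma_d\, d\mu$ from below: here one uses that $\bK$ is a convex body, so near any boundary point $\bK$ contains a cone of fixed aperture, guaranteeing $\int_\bK \sigma_d\, d\mu \ge c \int_{B(0,R)} \sigma_d\, dx$ for a dimension-and-$\bK$-dependent constant $c>0$; this is exactly where the constant $d_\bK$ and the convexity hypothesis enter. (iii) Bound $\int_\bK \sigma_d(x)\|x\|\, d\mu(x)$ from above by $\int_{B(0,R)} \sigma_d(x)\|x\|\, dx$ and evaluate both integrals in polar coordinates: with $\sigma_d \sim (1-r^2/R^2)^{2k}$ one gets ratios of Beta integrals, and the ratio $\int r^{n}(1-r^2/R^2)^{2k}dr \big/ \int r^{n-1}(1-r^2/R^2)^{2k}dr$ is $O(1/\sqrt{k}) = O(1/\sqrt d)$. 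That already gives an $O(1/\sqrt d)$ rate; to reach the claimed $O(1/d)$ one should instead center the analysis at an interior minimizer when possible, or — the route I expect the authors take — use the quadratic module / the fact that for a convex body one can always reduce to a minimizer where the relevant second-order behavior dominates, so that the numerator becomes $\int r^{n+1}(1-r^2/R^2)^{2k}dr \big/ \int r^{n-1}(1-r^2/R^2)^{2k}dr = O(1/k) = O(1/d)$.

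The main obstacle is the treatment of minimizers on the boundary of $\bK$: there the linear term of $f$ need not vanish, so a naively centered bump only yields $O(1/\sqrt d)$. The fix is to use convexity of $\bK$ to move the "mass" of $\sigma_d$ slightly to the interior along the direction of $-\nabla f(x^*)$ (the feasible descent directions at $x^*$ form a full-dimensional cone precisely because $\bK$ is convex and $x^*$ minimizes $f$), so that the first-order decrease of $f$ is captured; combined with a quadratic remainder this again produces $O(1/d)$. Making this shift rigorous — choosing the displacement of the bump center to be $\Theta(1/d)$, checking that the shifted bump still has the bulk of its support inside $\bK$, and carefully balancing the linear gain against the quadratic and truncation errors — is the technical heart of the argument and is where the explicit constant $C_{f,\bK}$ and the threshold $d_\bK$ are finally pinned down. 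Collecting the bounds from (i)–(iii) with this shift then yields $\underline f^{(d)}_{\bK} - f_{\min,\bK} \le C_{f,\bK}/d$ for all $d \ge d_\bK$, as claimed.
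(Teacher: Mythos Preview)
The paper does not prove Theorem~\ref{thm:dKLS}; it is quoted from \cite{DKL MOR}, so there is no in-paper proof to compare against. For context, the argument in \cite{DKL MOR} is structurally different from your bump-function construction: it passes through simulated annealing, taking as trial density a Boltzmann distribution $\sigma_t(x)\propto e^{-t\,g(x)}$ for a suitable (affine) upper estimator $g$ of $f$ with the same minimum, invoking a Kalai--Vempala type bound $\int_{\bK} g\,\sigma_t\,d\mu - g_{\min,\bK}\le n/t$ valid for linear $g$ on a convex body, and then approximating $e^{-t g}$ by a polynomial sum of squares of degree $O(t)$. Setting $t\asymp d$ gives the $O(1/d)$ rate.

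Your sketch is sound up to the treatment of boundary minimizers, but the proposed fix there contains a genuine error. At a minimizer $x^*$ there are \emph{no} feasible descent directions, and first-order optimality reads $\nabla f(x^*)^\top(y-x^*)\ge 0$ for all $y\in\bK$; equivalently $-\nabla f(x^*)$ lies in the \emph{outward} normal cone of $\bK$ at $x^*$. Shifting the bump center along $-\nabla f(x^*)$ therefore moves it out of $\bK$, not into it, and the claimed first-order gain does not materialize. A symmetric bump centered at a boundary minimizer with nonzero gradient really does give only $O(1/\sqrt d)$: already for $\bK=[0,1]$, $f(x)=x$ one computes
\[
\frac{\int_0^R x\,(1-x^2/R^2)^{2k}\,dx}{\int_0^R (1-x^2/R^2)^{2k}\,dx}\;\asymp\;\frac{1}{\sqrt{k}}.
\]
To reach $O(1/d)$ with explicit densities one needs a genuinely asymmetric construction (e.g.\ $(1-x)^{2d}$ in one dimension), and carrying this over to an arbitrary convex body is not the simple shift you describe; this is exactly the obstacle that the Boltzmann/Kalai--Vempala route in \cite{DKL MOR} sidesteps.
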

This result was an improvement on an earlier result by De Klerk, Laurent and Sun \cite[Theorem~3]{KLS MPA}, who showed a convergence rate in \smash{$O(1/\sqrt d)$} (for $\bK$ convex body or, more generallly, compact under a mild assumption).

As explained in \cite{Las11}  the parameter $\underline{f}^{(d)}_{\mathbf{K}}$ can be computed using semidefinite programming,
assuming  one knows  the (generalised)  moments of the  measure $\mu$ on $\bK$ with respect to some polynomial basis. Set
\begin{equation*}\label{mack}
m_{\alpha}(\mathbf{K}):=\int_{\mathbf{K}}b_{\alpha}(x)d\mu(x), \;
m_{\alpha,\beta}(\mathbf{K}):=\int_{\mathbf{K}}b_{\alpha}(x)b_{\beta}(x)d\mu(x) \ \ \ \mbox{ for } \alpha, \beta\in \oN^n,
\end{equation*}
where the polynomials $\{b_\alpha\}$ form a basis for the space $\oR[x_1,\ldots,x_n]_{2d}$ of polynomials of degree at most $2d$, indexed by
$N(n,2d)=\{\alpha\in \oN^n: \sum_{i=1}^n\alpha_i\le 2d\}$.
For example, the standard monomial basis in $\oR[x_1,\ldots,x_n]_{2d}$ is $b_\alpha(x) = x^\alpha := \prod_{i=1}^n x_i^{\alpha_i}$ for $\alpha\in N(n,2d)$, and then
$m_{\alpha,\beta}(\mathbf{K})= m_{\alpha+\beta}(\mathbf{K})$.
\smallskip
If $f(x)=\sum_{\beta\in N(n,d_0)}f_{\beta}b_{\beta}(x)$ has degree $d_0$, and
writing  $\sigma\in\Sigma[x]_{d}$ as $\sigma(x)=\sum_{\alpha\in N(n,2d)}\sigma_{\alpha}b_{\alpha}(x)$,  then the parameter $\underline{f}^{(d)}_{\mathbf{K}}$ in
 (\ref{fundr}) can be computed as follows:
\begin{eqnarray}\label{eqSDP}
\underline{f}^{(d)}_{\mathbf{K}}&=&\min\sum_{\beta\in N(n,d_0)}f_{\beta}\sum_{\alpha\in N(n,2d)}\sigma_{\alpha}m_{\alpha,\beta}(\mathbf{K})\label{fundr2}\\
 & &\mbox{ s.t. } \ \ \sum_{\alpha\in N(n,2d)}\sigma_{\alpha}m_{\alpha}(\mathbf{K})=1,\nonumber\\
&&\ \ \ \ \ \ \ \sum_{\alpha\in N(n,2d)}\sigma_{\alpha}b_{\alpha}(x)\in\Sigma[x]_d.\nonumber
\end{eqnarray}
Since the sum-of-squares condition on $\sigma$ may be written as a linear matrix inequality, this is a semidefinite program.
In fact, since the program (\ref{eqSDP})   has only one linear equality constraint, using semidefinite programming duality it can  be rewritten as a generalised eigenvalue problem.
In particular,
\smash{  $\underline{f}_{\mathbf{K}}^{(d)}$} {is equal to the   the smallest  generalised eigenvalue} of the system:
\[
Ax = \lambda Bx \quad \quad\quad (x \neq 0),
\]
where the symmetric matrices $A$ and $B$ are of order ${n + d \choose d}$ with rows and columns  indexed by $N(n,d)$,
and
\begin{equation}
\label{matrices A and B}
A_{\alpha, \beta} = \sum_{\delta \in N(n,d_0)} f_\delta \int_{\mathbf{K}} b_{\alpha}(x) b_{\beta}(x) b_{\delta}(x) d\mu(x),
\quad B_{\alpha, \beta} = \int_{\mathbf{K}} b_\alpha(x)b_{\beta}(x)d\mu(x) \quad \text{ for } \alpha, \beta \in {N}(n,d).
\end{equation}
For more details, see \cite{Las11,KLS MPA}. 
In particular, if the basis $\{b_\alpha\}$ is orthonormal
with respect to the measure $\mu$, then $B$ is the identity matrix, and \smash{$\underline{f}_{\mathbf{K}}^{(d)}$} is
the smallest eigenvalue of the above matrix $A$. For further reference we summarize this result, {which will play a central role in our approach.}

\begin{lemma}\label{lemsummarize}
Assume $\{b_\alpha:\alpha\in N(n,2d)\}$ is a basis of the space $\oR[x_1,\ldots,x_n]_{2d}$, which is orthonormal w.r.t. the measure $\mu $ on $K$, i.e.,
$\int_K b_\alpha(x)b_{\beta}(x)d\mu(x)=\delta_{\alpha,\beta}$. Then the parameter \smash{$\underline{f}^{(d)}_{\mathbf{K}}$} is equal to the smallest eigenvalue of the matrix $A$ in (\ref{matrices A and B}).
\end{lemma}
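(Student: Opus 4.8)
The plan is to rewrite problem (\ref{fundr}) as a standard trace-minimization semidefinite program over the spectrahedron $\{X\succeq 0:\ \mathrm{Tr}(X)=1\}$, whose optimal value is the smallest eigenvalue of the cost matrix $A$.

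First I would parametrize the feasible sums of squares: any $\sigma\in\Sigma[x]_d$ can be written as $\sigma=\sum_j q_j^2$ with each $q_j\in\oR[x_1,\ldots,x_n]_d$, and expanding $q_j$ in the orthonormal basis yields $q_j=\sum_{\alpha\in N(n,d)}c_{j,\alpha}b_\alpha$ for coefficient vectors $c_j\in\oR^{N(n,d)}$. (This uses that $\{b_\alpha:\alpha\in N(n,d)\}$ is a basis of $\oR[x_1,\ldots,x_n]_d$, which is the natural reading of the hypothesis and is exactly what makes $A$ and $B$ in (\ref{matrices A and B}) well defined; it holds for the orthonormal bases obtained by Gram--Schmidt orthonormalization of the monomials ordered by increasing degree, the bases used throughout.) By orthonormality the normalization constraint becomes $\int_K\sigma\,d\mu=\sum_j\sum_{\alpha,\beta}c_{j,\alpha}c_{j,\beta}\int_K b_\alpha b_\beta\,d\mu=\sum_j\|c_j\|^2$, and, writing $f=\sum_{\delta\in N(n,d_0)}f_\delta b_\delta$, the objective becomes
\[
\int_K \sigma f\,d\mu=\sum_j\sum_{\alpha,\beta}c_{j,\alpha}c_{j,\beta}\sum_{\delta}f_\delta\int_K b_\alpha b_\beta b_\delta\,d\mu=\sum_j c_j^\top A c_j,
\]
with $A$ exactly the matrix of (\ref{matrices A and B}) (and $B$ the identity).

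Next I would pass to the Gram matrix $X:=\sum_j c_jc_j^\top\succeq 0$, so that $\sum_j c_j^\top A c_j=\langle A,X\rangle$ and $\sum_j\|c_j\|^2=\mathrm{Tr}(X)$; conversely, every $X\succeq 0$ with $\mathrm{Tr}(X)=1$ arises this way via its spectral decomposition, so (\ref{fundr}) equals $\min\{\langle A,X\rangle:\ X\succeq 0,\ \mathrm{Tr}(X)=1\}$. Since $A\succeq\lambda_{\min}(A)I$, any feasible $X$ satisfies $\langle A,X\rangle\ge\lambda_{\min}(A)\,\mathrm{Tr}(X)=\lambda_{\min}(A)$, with equality at $X=vv^\top$ for $v$ a unit eigenvector of $A$ associated with $\lambda_{\min}(A)$; hence the optimum is $\lambda_{\min}(A)$ and it is attained (in particular the infimum in (\ref{fundr}) is a minimum).

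There is no serious obstacle here. The only points needing a little care are that the number of squares in $\sigma=\sum_j q_j^2$ need not be bounded in advance — which is precisely why the Gram-matrix reformulation is convenient, since it renders this a non-issue — and the compatibility of the orthonormal basis with the degree filtration noted above. Everything else is a routine translation via bilinearity of the integral and the definition of $A$.
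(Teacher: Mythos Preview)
Your argument is correct. The paper does not give a self-contained proof of this lemma; it presents it as a summary of the preceding discussion, which appeals to semidefinite programming duality (the program (\ref{eqSDP}) has a single equality constraint, so its dual yields the generalized eigenvalue problem $Ax=\lambda Bx$, and when $B=I$ this is the ordinary smallest eigenvalue of $A$) and refers to \cite{Las11,KLS MPA} for details. Your route stays entirely on the primal side: you pass from the sum-of-squares representation to the Gram matrix $X$ and recognize $\min\{\langle A,X\rangle:\ X\succeq 0,\ \mathrm{Tr}(X)=1\}=\lambda_{\min}(A)$ directly via the spectral decomposition of $A$. This avoids invoking strong duality and is a bit more elementary and self-contained; the paper's sketch, on the other hand, situates the result within the generalized-eigenvalue framework that also covers non-orthonormal bases. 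Your remark about compatibility of the basis with the degree filtration (that $\{b_\alpha:\alpha\in N(n,d)\}$ must span $\oR[x]_d$) is a genuine technical point the paper leaves implicit, and your handling of it is appropriate.
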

Under the conditions of the lemma, note in addition that, if the vector $u=(u_\alpha)_{\alpha\in N(n,d)}$ is an eigenvector of the matrix $A$ in (\ref{matrices A and B}) for its smallest eigenvalue, then the (square) polynomial
$\sigma(x)=(\sum_{\alpha\in N(n,d)}u_\alpha x^\alpha)^2$ is an optimal density function for the parameter \smash{$\underline{f}^{(d)}_{\mathbf{K}}$}.

\subsubsection*{Related hierarchy by De Klerk, Hess and Laurent}
For the hypercube $\bK = [-1,1]^n$, De Klerk, Hess and Laurent \cite{DHL SIOPT} considered a variant on the Lasserre hierarchy \eqref{fundr},
where the density function $\sigma$ is allowed to take the more general form
\begin{equation}\label{eqSch}
\sigma(x) =  \sum_{I \subseteq \{1,\ldots,n\}} \sigma_I(x) \prod_{i\in I} (1-x_i^2)
\end{equation}
and the polynomials $\sigma_I$ are sum-of-squares polynomials with degree at most $2d-2|I|$ (to ensure  that the degree of $\sigma$ is at most $2d$), and $I = \emptyset$ is included in the summation.
Moreover the measure $\mu$ is fixed to be
\begin{equation}\label{eqmu}
d\mu(x) = \left(\prod_{i=1}^n  \sqrt{1-x_i^2} \right)^{-1}dx_1\cdots dx_n.
\end{equation}
As we will recall below, this measure is associated with the Chebyshev  orthogonal polynomials.
We let \smash{$\uuf^{(d)}$} denote the parameter\footnote{We drop the dependence on $\bK$ which is implictly selected to be the box $[-1,1]^n$.}  obtained by using in (\ref{fundr}) these choices (\ref{eqSch})  of density functions $\sigma(x)$ and (\ref{eqmu}) of measure $\mu$. 
By construction, we have
$$f_{\min,\bK} \le \uuf^{(d)}\le  \underline{f}^{(d)}_{\mathbf{K}}.$$
De Klerk, Hess and Laurent \cite{DHL SIOPT} proved a stronger  convergence rate for the bounds $\uuf^{(d)}$.

\begin{theorem}\cite{DHL SIOPT}
\label{theoDKHL}
Let $f\in\oR[x]$ be a polynomial and $\bK=[-1,1]^n.$ We have
$$\uuf^{(d)}-f_{\min,\bK} =  O\left({1\over d^2}\right).$$
\end{theorem}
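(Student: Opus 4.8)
The plan is to reduce the $n$-dimensional problem to a one-dimensional estimate and then exploit the known second-order behaviour of extremal roots of Chebyshev (more generally Jacobi) polynomials. By Lemma~\ref{lemsummarize} applied with the tensored Chebyshev basis on $[-1,1]^n$, the parameter $\uuf^{(d)}$ — or rather the corresponding eigenvalue reformulation adapted to the extended density~\eqref{eqSch} — is a minimum of a Rayleigh-type quotient, so it suffices to \emph{exhibit a good feasible density} $\sigma$ of the form~\eqref{eqSch} with $\int_\bK \sigma\, d\mu = 1$ and $\int_\bK \sigma f\, d\mu - f_{\min,\bK} = O(1/d^2)$. First I would record the standard reductions: by a linear change of variables and by summing the construction coordinatewise it is enough to treat the case where the minimizer $x^*$ of $f$ over $\bK$ is a fixed point, and by a Taylor/univariate argument (as in \cite{DKL MOR,DHL SIOPT}) the genuinely hard instance is the linear polynomial $f(x) = x_1$ on $[-1,1]$, whose minimum is $-1$ attained at $x_1=-1$; a general $f$ is handled by absorbing higher-order terms into the $O(1/d^2)$ error using compactness of $\bK$ and boundedness of the derivatives of $f$.

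The core univariate step is then: construct a density $\sigma$ on $[-1,1]$, of the schmudgen-type form $\sigma(x) = \sigma_0(x) + \sigma_1(x)(1-x^2)$ with $\sigma_0,\sigma_1$ sums of squares of degree at most $2d$ and $2d-2$ respectively, normalized against $d\mu(x) = (1-x^2)^{-1/2}dx$, such that $\int_{-1}^1 x\,\sigma(x)\,d\mu(x) = -1 + O(1/d^2)$. The natural candidate, following the philosophy that orthogonal-polynomial Christoffel--Darboux kernels concentrate near the endpoints, is to take $\sigma$ proportional to a squared kernel evaluated near $x=-1$, or equivalently to use the fact that the least root $t_d$ of the degree-$d$ Chebyshev-type polynomial (here a Jacobi polynomial $P_d^{(\alpha,\beta)}$ with the parameters dictated by the weight $(1-x^2)^{-1/2}$ resp.\ its multiplication by $(1-x^2)$) satisfies $t_d = -1 + \Theta(1/d^2)$. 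Concretely I would let $\sigma$ be the normalized measure that places the right mass so that $\int x\,\sigma\,d\mu$ equals that extremal root, using the classical variational characterization of the smallest root of an orthogonal polynomial as the minimum of $\int x\, d\nu$ over densities $d\nu = p^2 d\mu$ with $\deg p\le d-1$ (and the analogous statement with the extra factor $(1-x^2)$ for the $I=\{1\}$ term). The bound $t_d + 1 = O(1/d^2)$ is the known asymptotics of extremal Jacobi roots (e.g.\ via the Mehler--Heine formula giving roots at $\cos\big(j_{\alpha,k}/d + o(1/d)\big)$ with $j_{\alpha,k}$ the Bessel zeros), which yields exactly the $1/d^2$ rate.

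I would then assemble the $n$-dimensional density by the standard tensor/product trick: writing $x^* = ((-1)^{\epsilon_i})_i$ WLOG, take $\sigma(x) = \prod_{i=1}^n \sigma^{(i)}(x_i)$ where each $\sigma^{(i)}$ is the univariate density above adapted to the sign of $x_i^*$; expanding the product over the $(1-x_i^2)$ factors shows $\sigma$ indeed has the form~\eqref{eqSch} with the degree bounds respected, and $\int_\bK \sigma f\, d\mu$ is controlled by a first-order Taylor expansion of $f$ at $x^*$ whose linear part contributes $\sum_i \partial_i f(x^*)\,(x_i^* + O(1/d^2)) = f_{\min,\bK} + O(1/d^2)$ and whose quadratic remainder is $O(\mathrm{Var}_\sigma) = O(1/d^2)$ since $\sigma$ concentrates within distance $O(1/d)$ of $x^*$. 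The main obstacle I anticipate is not the tensorization or the Taylor bookkeeping but getting the \emph{sharp} $1/d^2$ (rather than $1/d$) in the univariate step: this genuinely requires the second-order root asymptotics of Jacobi polynomials (equivalently, the $O(1/d^2)$ separation of the extreme interpolation/quadrature node from the endpoint), and verifying that the extra $(1-x^2)$ factor in the Schm\"udgen-type density — which is what distinguishes $\uuf^{(d)}$ from $\underline f^{(d)}_\bK$ and is responsible for the improvement from $1/d$ to $1/d^2$ — interacts correctly with the weight so that the relevant Jacobi parameters are the ones with the known endpoint behaviour. Once that univariate estimate is in place, the rest is routine.
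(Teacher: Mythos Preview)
Your approach differs substantially from the one the paper actually uses. The paper does not prove Theorem~\ref{theoDKHL} directly; it proves the \emph{stronger} Theorem~\ref{theoLasrate} for the pure Lasserre bounds $\underline f^{(d)}_\bK$ (SOS densities only, Chebyshev measure) and then deduces Theorem~\ref{theoDKHL} from $\uuf^{(d)}\le \underline f^{(d)}_\bK$. The core univariate step is also different: after the Taylor reduction to a separable \emph{quadratic} $g(x)=\sum_i(x_i^2+\alpha_ix_i)$, the paper treats $f(x)=x^2+\alpha x$ by writing $\underline f^{(d)}_\bK=\lambda_{\min}(A_d)$ for the explicit 5-diagonal matrix~\eqref{eqAdLas}, deleting its first two rows and columns to get a Toeplitz block $B$, embedding $B$ into a circulant matrix $C_d$, and using Cauchy interlacing to obtain $\lambda_{\min}(A_d)\le\lambda_3(C_d)$. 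The eigenvalues of $C_d$ are $\cos^2\vartheta_j+\alpha\cos\vartheta_j$ with $\vartheta_j=2\pi j/(d+1)$, and a second-order Taylor expansion at $\vartheta=\arccos(-\alpha/2)$ gives $\lambda_3(C_d)=-\alpha^2/4+O(1/d^2)$. No extremal-root asymptotics, no Christoffel--Darboux kernels, and no $(1-x^2)$ factor enter.

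Your sketch via extremal Jacobi roots and product densities is a plausible route (closer in spirit to the original needle-polynomial argument of \cite{DHL SIOPT}), but two points deserve correction. First, your claim that the factor $(1-x^2)$ in the Schm\"udgen densities ``is responsible for the improvement from $1/d$ to $1/d^2$'' is precisely what this paper refutes: Theorem~\ref{theoLasrate} shows the pure SOS hierarchy already attains $O(1/d^2)$ under the Chebyshev weight, so the gain comes from the measure, not from the enriched certificate. Second, your reduction to the \emph{linear} instance $f(x)=x_1$ with a vertex minimizer, plus the variance bound $\mathrm{Var}_\sigma=O(1/d^2)$, is fine at a vertex (since $(x_i+1)^2\le 2(x_i+1)$ on $[-1,1]$ controls the second moment by the first), but when a coordinate of the minimizer is interior the gradient term vanishes and you genuinely need a density concentrated at an interior point with second moment $O(1/d^2)$; the extremal-root construction does not deliver this, and you would have to fall back on a Jackson/Fej\'er-type kernel. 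The paper's choice to reduce to the univariate \emph{quadratic} rather than linear case is exactly what handles boundary and interior minimizers uniformly.
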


\subsubsection*{Contribution of this paper}
{In this paper we investigate the rate of convergence of the hierarchies  \smash{$\underline{f}^{(d)}_{\mathbf{K}}$} and
\smash{$\uuf^{(d)}$} to $f_{\min,\bK}$ for the case of the box $\bK=[-1,1]^n$.  The above discussion  raises naturally the following questions:
\begin{itemize}
\item[$\bullet$] Is the sublinear convergence rate $\uuf^{(d)}-f_{\min,\bK} =  O\left({1\over d^2}\right)$ tight, or can this result be improved?
\item[$\bullet$] Does this convergence rate extend to the Lasserre  bounds, where we restrict to sums-of-squares density functions?
\end{itemize}
We give a positive answer to both questions.
Regarding the first question we show that the convergence rate is   $\Omega(1/d^2)$  when $f$ is a linear polynomial, 
which implies that the convergence analysis in Theorem \ref{theoDKHL} for the bounds \smash{$\uuf^{(d)}$} is tight.
This relies on the eigenvalue reformulation of the bounds  (from Lemma \ref{lemsummarize}) and an additional link to the extremal zeros of the associated  Chebyshev 
polynomials. We also show that the same  lower bound holds for the convergence rate of the Lasserre  bounds
\smash{$\underline{f}^{(d)}_{\mathbf{K}}$} when considering measures on the hypercube corresponding to general Jacobi polynomials.}

{Regarding the second question we show that also the Lasserre bounds have a $O(1/d^2)$ convergence rate when using the Chebyshev type measure from (\ref{eqmu}). The starting point is again the reformulation from Lemma \ref{lemsummarize} in terms of eigenvalues, combined with some further analytical  arguments.}

{The paper is organised as follows. In Section \ref{secprel} we group preliminary results about orthogonal polynomials and their extremal roots. Then, in Section \ref{secLas} we analyse the convergence rate of the Lasserre bounds
 \smash{$\underline{f}^{(d)}_{\mathbf{K}}$} when $f$ is a linear polynomial
and, in Section \ref{secDKHL}, we analyse the bounds \smash{$\uuf^{(d)}$}.
In both cases we show a $\Omega(1/d^2)$ lower bound. In Section \ref{secupper} we show a $O(1/d^2)$
upper bound for   the convergence rate of the Lasserre bounds  \smash{$\underline{f}^{(d)}_{\mathbf{K}}$}, and this analysis is tight  in view of the previously shown lower bounds.}

\subsubsection*{Notation}
We recap here some notation that is used throughout.
For an integer $d\in\oN$, $\oR[x]_d$ denotes the set of $n$-variate polynomials in the variables $x=(x_1,\ldots,x_n)$ with degree at most $d$ and $\Sigma[x]_d$ denotes the set of polynomials with degree at most $2d$ that can be written as a sum of squares of polynomials.

We use the classical Landau notation. For two functions $f,g:\oN\rightarrow  \oR_+$,  the notation $f(n)=O(g(n))$ (resp., $f(n)=\Omega(g(n))$, $f(n)=o(g(n))$) means
$\limsup_{n\to\infty} f(n)/g(n) <\infty$ (resp., $\liminf_{n\to\infty} f(n)/g(n)>\infty$, $\lim_{n\to\infty}f(n)/g(n) =0$), and  $f(n)=\Theta(g(n))$
means $f(n)=O(g(n))$ and $f(n)=\Omega(g(n))$.
We also use this notation when $f,g$ are functions of a continuous variable $x$ and we want to indicate the behavior of $f(x)$ and $g(x)$ in the neighbourhood of a given scalar $x_0$ when $x\to x_0$.
So, $f(x)=O(g(x))$ as $x\to x_0$ means $\limsup_{x\to x_0} f(x)/g(x) <\infty$, etc.

\section{Preliminaries on orthogonal polynomials}\label{secprel}
In what follows we review some known facts on classical orthogonal polynomials that we need for our treatment.
Unless we give detailed references, the relevant results may be found in the classical text by Szeg\"o \cite{Szego_1975}
(see also \cite{Gautsch}).

We consider families of univariate polynomials $\{p_k(x)\}$ $(k=0,1,\ldots,d)$,  that satisfy a three-term
recursive relation of the form:
\begin{equation}
\label{eq:recursion}
xp_k(x) = a_kp_{k+1}(x) + b_kp_k(x) + c_kp_{k-1}(x) \quad\quad (k=1,\ldots,d-1),
\end{equation}
where $p_0$ is a constant, 
$p_1(x) = (x-b_0)p_0/a_0$, and $a_k$, $b_k$ and $c_k$ are
real values that satisfy $a_{k-1}c_k >0$ for $k=1,\ldots,d-1$. If we set $c_0=0$ then relation (\ref{eq:recursion}) also holds for $k=0$).

\noindent Defining the $k\times k$ tri-diagonal matrix
\begin{equation}
\label{matrix_Ak}
A_k :=
\left(
  \begin{array}{ccccc}
    b_0 & a_0 & 0 & \cdots & 0 \\
   c_1 & b_1 & a_1 &  & 0 \\
    0 & \ddots & \ddots & \ddots &  \\
    \vdots &  & c_{k-2} & b_{k-2} &a_{k-2} \\
    0 & 0 & \cdots & c_{k-1} &b_{k-1} \\
  \end{array}
\right),
\end{equation}
one has the classical relation:
\begin{equation}
\label{eq:det_roots}
\left(\prod_{j=0}^{k-1} a_j\right) p_k(x) = \det(xI_k - A_k){p_0} \quad \text{ for } k=1,\ldots,d,
\end{equation}
which can be easily verified using induction on $k\ge 1$  and the relation (\ref{eq:recursion}) (see, e.g., \cite{Ismail_Li_1992}).
Therefore,  the roots of the polynomial $p_k$ are precisely the eigenvalues of the matrix $A_k$ in (\ref{matrix_Ak}). 

\medskip
Recall that the polynomials $p_k$ $(k=0,1,\ldots,d)$ are {\em orthogonal with respect to a
 weight function} $w:[-1,1]\rightarrow \mathbb{R}$, that is continuous and positive on $(-1,1)$,
 if
 \[
 \langle p_i,p_j\rangle := \int_{-1}^1 p_i(x)p_j(x) w(x)dx = 0 \quad \mbox{ for all  \ $i \neq j$}.
  \]
  We denote by $\hat p_k:=p_k/\sqrt{\langle p_k,p_k\rangle}$ the corresponding normalized polynomial, so that $\langle \hat p_k,\hat p_k\rangle =1$.

As is well known, if the polynomials $p_k$ are degree $k$ polynomials that are pairwise orthogonal with respect to such a  weight function then they  satisfy a three-terms recurrence relation of the form (\ref{eq:recursion}) (see, e.g., \cite[\S 1.3]{Gautsch}). Of course, the corresponding orthonormal polynomials $\hat p_k$ also satisfy such a three-terms recurrence relation (for different scaled parameters $a_k,b_k,c_k$).

By taking  the inner product of both sides in (\ref{eq:recursion}) with $p_{k-1}$ and  $p_{k+1}$ one gets the relations $c_k \langle p_{k-1},p_{k-1}\rangle =\langle p_k,xp_{k-1}\rangle$ and
$a_k \langle p_{k+1},p_{k+1}\rangle= \langle p_{k+1},xp_k\rangle$, which imply $c_k \langle p_{k-1},p_{k-1}\rangle
=a_{k-1}\langle p_k,p_k\rangle$ and thus $a_{k-1}c_k>0$. Moreover, when considering the recurrence relations associated with the orthonormal  polynomials $\hat p_k$, we have $a_{k-1}=c_k$ for any $k\ge 1$, i.e., the matrix $A_k$ in (\ref{matrix_Ak}) is symmetric.
We will use later the following  fact.

\begin{lemma}\label{lem_Ak}
Let $\{\hat p_k\}$ be orthonormal polynomials for the measure $d\mu(x)=w(x)dx$ on $[-1,1]$, where $w(x)$ is continuous and positive on $(-1,1)$, and assume they satisfy the three-terms recurrence relation (\ref{eq:recursion}).
Then,  the matrix   
\begin{equation}\label{eqhatA}
\left( \langle x\hat p_i,\hat p_j\rangle=  \int_{-1}^1 x\hat p_i(x)\hat p_j(x)w(x)dx\right)_{i,j=0}^{k-1}
\end{equation}
is equal to the matrix  $A_k$  in (\ref{matrix_Ak}). In particular, its smallest eigenvalue  is the smallest root of the polynomial $p_k$.
\end{lemma}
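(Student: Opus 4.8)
The plan is to verify the claimed matrix identity entrywise and then invoke the already-established relation \eqref{eq:det_roots} to identify the eigenvalues with the roots of $p_k$. First I would fix $i,j \in \{0,1,\ldots,k-1\}$ and compute $\langle x\hat p_i,\hat p_j\rangle$ by substituting the three-terms recurrence \eqref{eq:recursion} for $x\hat p_i(x)$, namely $x\hat p_i = a_i\hat p_{i+1} + b_i\hat p_i + c_i\hat p_{i-1}$ (valid for $i\ge 0$ once we set $c_0 = 0$, as noted in the text, and with the understanding that $\hat p_{-1}:=0$). Using bilinearity of $\langle\cdot,\cdot\rangle$ and the orthonormality $\langle \hat p_r,\hat p_s\rangle = \delta_{r,s}$, one gets $\langle x\hat p_i,\hat p_j\rangle = a_i\delta_{i+1,j} + b_i\delta_{i,j} + c_i\delta_{i-1,j}$. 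This is exactly the $(i,j)$ entry of $A_k$ in \eqref{matrix_Ak}: the diagonal entries are $b_i$, the superdiagonal entries ($j = i+1$) are $a_i$, the subdiagonal entries ($j = i-1$) are $c_i$, and all other entries vanish. One small point to record carefully is that when $i = k-1$ the term $a_{k-1}\hat p_k$ does not contribute, since $\hat p_k$ does not appear among the basis vectors indexed by $\{0,\ldots,k-1\}$, consistent with $A_k$ having no entry beyond its last column; similarly the $c_0\hat p_{-1}$ term is harmless.

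Once the identity \eqref{eqhatA} $= A_k$ is in hand, the second assertion is immediate: by \eqref{eq:det_roots} we have $\left(\prod_{j=0}^{k-1}a_j\right)p_k(x) = \det(xI_k - A_k)p_0$, and since $\prod_{j=0}^{k-1}a_j \neq 0$ (this follows from $a_{j-1}c_j > 0$, so in particular no $a_j$ vanishes) and $p_0 \neq 0$, the polynomial $p_k$ and $\det(xI_k - A_k)$ have the same roots. Hence the roots of $p_k$ are precisely the eigenvalues of $A_k$, and in particular the smallest root of $p_k$ equals the smallest eigenvalue of $A_k$, which by the first part equals the smallest eigenvalue of the matrix in \eqref{eqhatA}. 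Note also that since we are working with the \emph{orthonormal} polynomials $\hat p_k$, the text already observed that $a_{k-1} = c_k$, so $A_k$ is real symmetric and its eigenvalues are automatically real — this is consistent with $p_k$ having $k$ real roots, though for the statement as phrased we do not strictly need to invoke this.

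There is really no serious obstacle here; the lemma is a bookkeeping consequence of the recurrence relation and \eqref{eq:det_roots}, both of which are already available in the excerpt. The only mild subtlety worth being explicit about is the boundary handling of the indices — the conventions $c_0 = 0$, $\hat p_{-1} = 0$, and the truncation of the recurrence at $i = k-1$ — so that the entrywise comparison with $A_k$ matches at the corners of the matrix. I would present the argument in two short steps accordingly: (i) the entrywise computation showing \eqref{eqhatA} $= A_k$, and (ii) the one-line deduction via \eqref{eq:det_roots} that the spectrum of $A_k$ is the root set of $p_k$.
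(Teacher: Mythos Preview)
Your proposal is correct and follows essentially the same approach as the paper: substitute the three-terms recurrence \eqref{eq:recursion} into $\langle x\hat p_i,\hat p_j\rangle$, read off the entries $a_i,b_i,c_i$ using orthonormality, and then invoke \eqref{eq:det_roots} to identify the spectrum of $A_k$ with the roots of $p_k$. Your added remarks on the boundary conventions and on why $\prod_j a_j\neq 0$ are harmless extra care beyond what the paper spells out.
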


\begin{proof}
Using the recurrence relation (\ref{eq:recursion}) we obtain
\begin{eqnarray*}
\langle x \hat p_i,\hat p_j\rangle       &=& \langle a_i\hat p_{i+1}+b_i \hat p_i+c_i \hat p_{i-1},\hat p_j\rangle
  \\
       &=& \left\{
       \begin{array}{ll}
       a_i  & \mbox{if $j=i+1$} \\
       b_i & \mbox{if $j=i$} \\
       c_i  & \mbox{if $j=i-1$} \\
       0 & \mbox{otherwise}.
\end{array}\right.
\end{eqnarray*}
Hence the matrix in (\ref{eqhatA}) is equal to $A_k$ and the last claim follows from (\ref{eq:det_roots}).
\end{proof}

It  is also known that the roots of $p_k$ are all real, simple and lie in $(-1,1)$, and  that they interlace the roots of $p_{k+1}$ (see, e.g., \cite[\S 1.2]{Gautsch}). In what follows we will use  the smallest (and largest) roots to give closed-form expressions for  the bounds \smash{$ \underline f^{(d)}_\bK$ and $\uuf^{(d)}$} in some examples.

\medskip
We now recall several classical univariate orthogonal polynomials on the interval $[-1,1]$ and some information on their smallest roots.

\subsection*{Chebyshev polynomials}
We will use the  univariate   Chebyshev polynomials (of the first kind),
 defined by:
\begin{equation}\label{eqTUn}
T_k(x)= \cos(k\arccos (x)),\ \ \text{ for } \ x\in [-1,1],\ k = 0,1,\ldots.
\end{equation}
They satisfy the following three-terms recurrence relationships:
\begin{equation}\label{eqTnrec}
T_0(x)=1,\ 
T_1(x)=x, \ T_{k+1}(x)= 2xT_k(x)-T_{k-1}(x) \ \ \text{ for } \ k\ge 1.
\end{equation}
The Chebyshev polynomials are orthogonal with respect to the weight function
  $w(x) = \frac{1}{ \sqrt{1-x^2}}$
and the roots of $T_k$ are given by
\begin{equation}\label{rootTk}
\cos\left( \frac{2i-1}{2k}\pi \right) \quad \text{ for } i = 1,\ldots,k.
\end{equation}

\subsection*{Jacobi polynomials}
The Jacobi polynomials,  denoted by $\{P_k^{\alpha,\beta}\}$ $(k=0,1,\ldots)$, are orthogonal with respect to the weight function
\begin{equation}
\label{Jacobi_weight_fn}
w_{\alpha,\beta}(x) := (1-x)^\alpha(1+x)^\beta, \quad x \in (-1,1)
\end{equation}
where $\alpha > -1$ and $\beta >-1$ are given parameters.
The normalized Jacobi polynomials are denoted by $\hat P^{\alpha,\beta}_k$, so that
$\int_{-1}^1 (\hat P^{\alpha,\beta}_k(x))^2 w_{\alpha,\beta}(x)dx =1$.

Thus the Chebyshev polynomials may be seen as the special case corresponding to $\alpha = \beta = -\frac{1}{2}$.

Likewise, the Legendre polynomials are the orthogonal polynomials w.r.t.\ the constant weight function ($w(x)=1$), so they correspond to the special case $\alpha = \beta = 0$.


There is no closed-form expression for the roots of Jacobi polynomials in general. But some bounds are known for the smallest root of $P_k^{\alpha,\beta}$, denoted by $\xi^{\alpha,\beta}_k$,  that we recall in the next theorem.

\begin{theorem}\label{thm_root}
The smallest root, denoted  $\xi^{\alpha,\beta}_k$, of the Jacobi polynomial $P^{\alpha,\beta}_k$ satisfies the following inequalities:
\begin{itemize}
\item[(i)] (\cite{Driver_Jordaan_2012})
$\xi^{\alpha,\beta}_k\le -1 + \frac{2(\beta+1)(\beta+3)}{2(k-1)(k+\alpha+\beta+2)+(\beta+3)(\alpha+\beta+2)}.$

\item[(ii)] (\cite{Dimitrov_Nikolov_2010})
$\xi^{\alpha,\beta}_k\ge \frac{F-4(k-1)\sqrt{\Delta}}{E},$ where
\begin{eqnarray*}
F &=& (\beta - \alpha)\left((\alpha + \beta + 6)k + 2(\alpha + \beta)\right), \\
E &=& (2k + \alpha + \beta)\left(k(2k + \alpha + \beta) + 2(\alpha + \beta + 2)\right) \\
\Delta &=&  k^2(k + \alpha + \beta + 1)^2 +(\alpha + 1)(\beta + 1)(k^2 + (\alpha + \beta + 4)k + 2(\alpha + \beta)).
\end{eqnarray*}
\end{itemize}
\end{theorem}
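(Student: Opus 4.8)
Both inequalities bound the smallest zero $\xi^{\alpha,\beta}_k$ of $P^{\alpha,\beta}_k$ and, as indicated, are due to \cite{Driver_Jordaan_2012} and \cite{Dimitrov_Nikolov_2010}; here I outline the route I would take to obtain bounds of this shape. The common starting point is the moment description of the zeros: writing $p:=P^{\alpha,\beta}_k$ with zeros $\xi^{\alpha,\beta}_k=x_1<\cdots<x_k$ in $(-1,1)$, one has $p'(x)/p(x)=\sum_{i=1}^k 1/(x-x_i)$, so setting $t_i:=1/(x_i+1)>0$ (whence $t_1$ is the \emph{largest}, since $x_1$ is the smallest zero) and $S_m:=\sum_i t_i^m$, evaluating $p'/p$ and $(p'/p)'$ at $x=-1$ gives the closed forms $S_1=-p'(-1)/p(-1)$ and $S_2=(p'(-1)/p(-1))^2-p''(-1)/p(-1)$. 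These are explicit rational functions of $k,\alpha,\beta$ via the endpoint value $P^{\alpha,\beta}_k(-1)=(-1)^k\binom{k+\beta}{k}$ and the differentiation rule $\frac{d}{dx}P^{\alpha,\beta}_k=\tfrac12(k+\alpha+\beta+1)P^{\alpha+1,\beta+1}_{k-1}$ applied once and twice; for instance $S_1=\tfrac{k(k+\alpha+\beta+1)}{2(\beta+1)}$, and similarly for $S_2$. Everything below is then an elementary inequality between $\xi^{\alpha,\beta}_k=1/t_1-1$ and expressions built from $S_1,S_2$.

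For the upper bound (i), the plan is to analyze $h(x):=-p(x)/p'(x)$ on $(-\infty,x_1)$, where $p$ and $p'$ are both nonzero. Writing $g:=p'/p$ one has $h=-1/g$, so $h>0$ (as $g<0$ there), $h'=g'/g^2<0$, and $h''=(g''g-2(g')^2)/g^3$, whose numerator equals $2\bigl((\sum_i v_i)(\sum_i v_i^3)-(\sum_i v_i^2)^2\bigr)\ge 0$ by Cauchy--Schwarz with $v_i:=1/(x_i-x)>0$, while $g^3<0$; hence $h$ is concave on $(-\infty,x_1)$. Since a concave function lies below its tangent at $x=-1$ and $h(x_1)=0$, the inequality $0\le h(-1)+h'(-1)(x_1+1)$ gives $x_1+1\le -h(-1)/h'(-1)=S_1/S_2$, and substituting the closed forms yields a rational bound of the type in (i). This level--one Euler--Rayleigh estimate is already $\Theta(1/k^2)$; the sharp constant in (i) is reached in \cite{Driver_Jordaan_2012} by a finer device, namely a mixed three-term relation between $P^{\alpha,\beta}_\bullet$ and the contiguous family $P^{\alpha,\beta+1}_\bullet$ together with interlacing of their zeros.

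For the lower bound (ii), the plan is to split $S_2=t_1^2+\sum_{i\ge2}t_i^2$ and bound the tail below by the power--mean inequality $\sum_{i\ge2}t_i^2\ge(S_1-t_1)^2/(k-1)$, giving the quadratic inequality $k\,t_1^2-2S_1t_1+S_1^2-(k-1)S_2\le0$ in $t_1$. Its discriminant is nonnegative because $kS_2\ge S_1^2$ (Cauchy--Schwarz), so $t_1\le\tfrac{S_1+\sqrt{(k-1)(kS_2-S_1^2)}}{k}$, and therefore $\xi^{\alpha,\beta}_k=\tfrac1{t_1}-1\ge\tfrac{k}{S_1+\sqrt{(k-1)(kS_2-S_1^2)}}-1$. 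Plugging in the explicit $S_1,S_2$ and rationalizing produces a lower bound with the same structure as (ii) — a $(k-1)$ factor multiplying a polynomial in $k$ under a square root, over a polynomial-in-$k$ denominator, with the numerator correction vanishing when $\alpha=\beta$; \cite{Dimitrov_Nikolov_2010} carries out a computation of this type and simplifies it to the stated closed form with $F$, $E$, $\Delta$.

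The point is that the strategy requires no delicate case distinctions: the concavity of $h$ for (i) and the power--mean inequality for (ii) hold unconditionally, so the only genuine work is the routine but lengthy algebra needed to pass from the clean quantities $S_1/S_2$ and $k/(S_1+\sqrt{(k-1)(kS_2-S_1^2)})-1$ to the exact expressions in $F,E,\Delta$. Since the sharp constants beyond the naive level--one bounds are wanted, I would simply invoke \cite{Driver_Jordaan_2012,Dimitrov_Nikolov_2010}; for our later use only the asymptotics $\xi^{\alpha,\beta}_k=-1+\Theta(1/k^2)$ matter — together with the matching $1-\Theta(1/k^2)$ for the largest zero, obtained from the symmetry $P^{\alpha,\beta}_k(-x)=(-1)^kP^{\beta,\alpha}_k(x)$ — and these follow immediately from (i)--(ii).
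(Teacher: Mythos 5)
The paper offers no proof of this theorem at all: it is stated as a quotation of results from \cite{Driver_Jordaan_2012} and \cite{Dimitrov_Nikolov_2010}, and only its consequence (Corollary \ref{cor_root}, the $-1+\Theta(1/k^2)$ asymptotics) is actually derived and used. Your proposal is therefore doing strictly more than the paper: the Euler--Rayleigh setup is correct ($S_1=-p'(-1)/p(-1)$, $S_2=(p'(-1)/p(-1))^2-p''(-1)/p(-1)$, computed from $P^{\alpha,\beta}_k(-1)=(-1)^k\binom{k+\beta}{k}$ and the contiguous-derivative rule), the concavity of $h=-p/p'$ on $(-\infty,x_1)$ and the tangent-line estimate $x_1+1\le S_1/S_2$ are sound (one can also get this in one line from $S_2\le t_1S_1$), and the power-mean/quadratic argument giving $t_1\le\bigl(S_1+\sqrt{(k-1)(kS_2-S_1^2)}\bigr)/k$ is correct. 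Moreover, since $S_1=\Theta(k^2)$ and $S_2=\Theta(k^4)$, these level-one bounds already yield $\xi^{\alpha,\beta}_k=-1+\Theta(1/k^2)$, which is the only content the paper extracts from the theorem; so your self-contained argument would in fact suffice to replace both the theorem and the computation in Corollary \ref{cor_root}. What it does not do --- and what you correctly flag --- is reproduce the exact constants $F,E,\Delta$ and the sharp coefficient $2(\beta+1)(\beta+3)$ in (i); for those you, like the authors, must simply cite the two references. As a verification of the stated theorem the proposal is acceptable only in the same sense the paper's treatment is (by citation), but as a route to what the paper actually needs it is complete and arguably more transparent.
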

\ignore{
 may upper bounded by (see \cite{Driver_Jordaan_2012}):
\begin{equation}
\label{smallest_Jacobi_root_bound}
\end{equation}
In the case of Legendre polynomials ($\alpha = \beta = 0$) this bound becomes:
\[
-1 + \frac{3}{(k-1)(k+2)+3}.
\]
Moreover, the smallest root of $P_k^{\alpha,\beta}$ is lower bounded by (see \cite{Dimitrov_Nikolov_2010}):
\begin{equation}
\label{Jacobi_root_upper_bound}
\frac{F-4(k-1)\sqrt{\Delta}}{A},
\end{equation}
where
\begin{eqnarray*}
F &=& (\beta - \alpha)\left((\alpha + \beta + 6)k + 2(\alpha + \beta)\right), \\
A &=& (2k + \alpha + \beta)\left(k(2k + \alpha + \beta) + 2(\alpha + \beta + 2)\right) \\
\Delta &=&  k^2(k + \alpha + \beta + 1)^2 +(\alpha + 1)(\beta + 1)(k^2 + (\alpha + \beta + 4)k + 2(\alpha + \beta)).
\end{eqnarray*}
Note that $\frac{B-4(k-1)\sqrt{\Delta}}{A} = -1 +  \Omega\left(\frac{1}{k^2} \right)$.
For example, if $\alpha = \beta = 0$, one has
\begin{eqnarray*}
  \frac{B-4(k-1)\sqrt{\Delta}}{A} &=&   \frac{-4(k-1)\sqrt{k^2(k  + 1)^2 +k^2 + 4k  }}{2k\left(2k^2 + 4\right)}\\
   &\ge&  \frac{-4\sqrt{k^2(k^2  - 1)^2 +(k-1)^2(k^2 + 4k)  }}{4k^3 } \\
   &=& -\sqrt{1 - \frac{1}{k^2} + \frac{2}{k^3} - \frac{6}{k^4} + \frac{4}{k^5}} \\
   &=&  -1 + \Omega\left(\frac{1}{k^2}\right),
\end{eqnarray*}
where the last equality follows from the expansion
\[
\sqrt{1+x} = 1 +\frac{1}{2}x - \frac{1}{8}x^2 + \frac{1}{16}x^3 - \ldots
\]
}

The smallest roots $\xi^{\alpha,\beta}_k$ of the Jacobi polynomials $P^{\alpha,\beta}_k$ converge to $-1$ as $k\to \infty$. Using the above  bounds we see  that the rate of convergence  is
 $O(1/k^2)$.

\begin{corollary}
\label{cor_root}
The smallest roots of the Jacobi polynomials $P^{\alpha,\beta}_k$ satisfy
$$\xi^{\alpha,\beta}_k= -1 + \Theta\left({1\over k^2}\right) \ \  \text{ as } \ k\rightarrow \infty.$$
\end{corollary}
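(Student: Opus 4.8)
The plan is to read off both halves of the $\Theta(1/k^2)$ estimate directly from the two–sided bounds of Theorem~\ref{thm_root}, analysing each as $k\to\infty$. For the upper half, rewrite part~(i) as
$0<\xi^{\alpha,\beta}_k+1\le \frac{2(\beta+1)(\beta+3)}{2(k-1)(k+\alpha+\beta+2)+(\beta+3)(\alpha+\beta+2)}$,
where the left inequality is automatic since every root of $P^{\alpha,\beta}_k$ lies in $(-1,1)$. As $\beta>-1$ the numerator is a fixed positive constant while the denominator is a quadratic in $k$ with leading coefficient $2$, so $\xi^{\alpha,\beta}_k+1=O(1/k^2)$.

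For the lower half I would start from part~(ii), namely $\xi^{\alpha,\beta}_k\ge (F-4(k-1)\sqrt{\Delta})/E$, and show $(E+F-4(k-1)\sqrt{\Delta})/E\ge \Omega(1/k^2)$. Here $E$ and $F$ are polynomials in $k$ of degrees $3$ and $1$, whereas $\Delta=k^4+2(\alpha+\beta+1)k^3+\dots$; applying $\sqrt{1+u}=1+\tfrac12u-\tfrac18u^2+\dots$ with $u=\Delta/k^4-1$ yields $\sqrt{\Delta}=k^2+(\alpha+\beta+1)k+\tfrac12(\alpha+1)(\beta+1)+O(1/k)$. Substituting this expansion, the $k^3$– and $k^2$–coefficients of $E$ cancel against those of $4(k-1)\sqrt{\Delta}$, so $E+F-4(k-1)\sqrt{\Delta}=c\,k+O(1)$ for a constant $c$ which, after simplification (conveniently re-expressed through $\alpha+\beta$ and $\beta-\alpha$), equals $2(\beta+1)(\beta+5)$. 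Since $E\sim 4k^3$ this gives $(F-4(k-1)\sqrt{\Delta})/E+1=\tfrac{c}{4k^2}(1+o(1))$, and $c>0$ because $\beta>-1$; hence $\xi^{\alpha,\beta}_k+1=\Omega(1/k^2)$. Combined with the upper half this yields $\xi^{\alpha,\beta}_k=-1+\Theta(1/k^2)$.

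The only genuine work is the bookkeeping in the second paragraph: carrying the expansion of $\sqrt{\Delta}$ to precisely the order at which the surviving linear term of $E+F-4(k-1)\sqrt{\Delta}$ is exposed, and then checking that its coefficient is strictly positive for every admissible pair $\alpha,\beta>-1$. The $\alpha=\beta=0$ (Legendre) instance, where $\frac{F-4(k-1)\sqrt{\Delta}}{E}=-1+\Omega(1/k^2)$ follows from a cruder application of $\sqrt{1+x}=1+\tfrac12x-\tfrac18x^2+\dots$, serves as a useful sanity check; no conceptual obstacle arises beyond this computation.
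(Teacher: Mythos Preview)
Your proof is correct and follows essentially the same route as the paper: the $O(1/k^2)$ half comes directly from Theorem~\ref{thm_root}(i), and the $\Omega(1/k^2)$ half from expanding $\sqrt{\Delta}$ via $\sqrt{1+u}=1+\tfrac12 u-\tfrac18 u^2+\dots$ and checking that the leading $1/k^2$ coefficient of $(F-4(k-1)\sqrt{\Delta})/E+1$ is strictly positive. Your bookkeeping is in fact tidier than the paper's --- working with the numerator $E+F-4(k-1)\sqrt{\Delta}$ directly and extracting the explicit constant $c=2(\beta+1)(\beta+5)$ makes the positivity for all $\beta>-1$ immediate, whereas the paper only asserts that its (more complicated, and in fact miscomputed) coefficient ``can be verified to be strictly positive.''
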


\begin{proof}
The upper bound in  Theorem \ref{thm_root}(i) gives directly $\xi^{\alpha,\beta}_k= -1 + O\left({1\over k^2}\right)$.
We now use the lower bound in Theorem \ref{thm_root}(ii) to show $\xi^{\alpha,\beta}_k= -1 + \Omega\left({1\over k^2}\right)$. For this we give asymptotic estimates for the quantities $E,F,\Delta$. First, using the expansion $\sqrt{1+x}= 1+{x\over 2}-{x^2\over 8}+o(x^2)$ as $x\rightarrow 0$ we obtain
$$\sqrt \Delta = k^2\left(1+{\alpha+\beta+1\over k} +{(\alpha+1)(\beta+1)\over 2k^2} +o\left({1\over k^2}\right)\right).$$
Second, using the expansion ${1\over 1+x}=1-x+x^2+o(x^2)$ as $x\rightarrow 0$ we obtain
$${1\over E}={1\over 4k^3}\left(1-{\alpha+\beta\over k} -{4(\alpha+\beta+2)\over k^2} +o\left({1\over k^2}\right)\right).$$
Combining these two relations gives
$$\begin{array}{lll}
{4(k-1)\sqrt\Delta\over E} &=
& \left(1-{1\over k}\right)
\left(1+{\alpha+\beta+1\over k} +{(\alpha+1)(\beta+1)\over 2k^2} +o\left({1\over k^2}\right)\right)
\left(1-{\alpha+\beta\over k} -{4(\alpha+\beta+2)\over k^2} +o\left({1\over k^2}\right)\right)\\
&=& 1 +{C\over 2k^2} +o\left({1\over k^2}\right),
\end{array}$$
where we set $C= (\alpha+1)(\beta+1) -8(\alpha+\beta+2) -2(\alpha+\beta)(\alpha+\beta+1)-2$.
Finally, using
$${F\over E}= {(\beta-\alpha)(\beta+\alpha+6)\over 4k^2} +o\left({1\over k^2}\right),$$
we obtain
$${F-4(k-1)\sqrt\Delta\over E}
= -1 + {1\over k^2}\left( {(\beta-\alpha)(\beta+\alpha+6)\over 4}  -{C\over 2} \right)+o\left({1\over k^2}\right),$$
where the coefficient of $1/k^2$ can be verified to be strictly positive, which thus implies the estimate $\xi^{\alpha,\beta}_k=-1 +\Omega(1/k^2)$.
\end{proof}

It is also known that $P^{\alpha,\beta}_k(x)=(-1)^k P^{\beta,\alpha}_k(-x)$. Therefore the largest root of $P^{\alpha,\beta}_k(x)$ is equal to $-\xi^{\beta,\alpha}_k= 1-\Theta(1/ k^2)$.

\section{{Tight lower bounds for a class of examples}}

In this section we consider the following simple examples
\begin{equation}
\label{example}
\min \left\{\sum_{i=1}^n c_ix_i: \;  \; x \in [-1,1]^n\right\},
\end{equation}
asking to minimize the  linear polynomial $f(x)=\sum_{i=1}^n c_ix_i$ over the box $\bK=[-1,1]^n$. Here $c_i\in \oR$ are given scalars for $i\in [n]$. Hence, $f_{\min,\bK}=-\sum_{i=1}^n |c_i|$.
For these examples we can obtain explicit closed-form expressions for the Lasserre bounds \smash{$\underline f^{(d)}_\bK$} when using product measures with  weight functions $w_{\alpha,\beta}$ on $[-1,1]$,   and also for the strengthened bounds \smash{$\uuf^{(d)}$} considered by De Klerk, Hess and Laurent, which use product measures with weight functions  $w_{-1/2,-1/2}$. These closed-form expressions are in terms of extremal roots of Jacobi polynomials.

\subsection{Tight lower bound for the Lasserre hierarchy}\label{secLas}

Here we consider the bounds \smash{$\underline{f}^{(d)}_{\mathbf{K}}$}  for the example (\ref{example}), when the measure $\mu$ on $\bK=[-1,1]^n$ is a product of univariate measures given by weight functions.

First we consider 
 the univariate case $n=1$.
When  the measure $\mu$  on $\mathbf{K} = [-1,1]$
is given by a continuous positive weight function $w$ on $(-1,1)$,
one can obtain a closed form expression for  $\underline{f}^{(d)}_{\mathbf{K}}$ in terms of the smallest root of the corresponding orthogonal polynomials.

\begin{theorem}\label{thm:main1}
 Consider the measure $d\mu(x) = w(x)dx$ on $\bK=[-1,1]$, where $w$ is a positive, continuous weight function on $(-1,1)$, and let $p_k$ be  univariate degree $k$ polynomials that are orthogonal
 with respect to this measure.
For the univariate polynomial $f(x)=x$ {(resp., $f(x)=-x$),}
the parameter  \smash{$\underline{f}^{(d)}_{\mathbf{K}}$}  is equal to the smallest root {(resp., the opposite of the largest root)} of the polynomial $p_{d+1}$.
\end{theorem}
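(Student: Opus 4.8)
The plan is to exploit Lemma~\ref{lemsummarize}, which says that $\underline{f}^{(d)}_{\mathbf{K}}$ equals the smallest eigenvalue of the matrix $A$ defined in (\ref{matrices A and B}), computed with respect to \emph{any} orthonormal basis of $\oR[x]_{2d}$. In the univariate case with $f(x)=x$, and taking as basis the orthonormal polynomials $\{\hat p_0,\hat p_1,\ldots,\hat p_d\}$ associated with the weight $w$, the matrix $A$ has order $d+1$ with entries $A_{i,j}=\int_{-1}^1 x\,\hat p_i(x)\hat p_j(x)w(x)dx = \langle x\hat p_i,\hat p_j\rangle$ for $0\le i,j\le d$.

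The key observation is then that this is exactly the matrix appearing in Lemma~\ref{lem_Ak}, with $k=d+1$: by that lemma, the matrix $\big(\langle x\hat p_i,\hat p_j\rangle\big)_{i,j=0}^{d}$ coincides with the tridiagonal Jacobi matrix $A_{d+1}$ from (\ref{matrix_Ak}), whose eigenvalues are precisely the roots of $p_{d+1}$ via the determinantal identity (\ref{eq:det_roots}). Hence the smallest eigenvalue of $A$ is the smallest root of $p_{d+1}$, which is exactly the claim for $f(x)=x$.

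For $f(x)=-x$, the matrix $A$ is replaced by $-A$, whose smallest eigenvalue is the negative of the largest eigenvalue of $A$, i.e. the opposite of the largest root of $p_{d+1}$; this gives the parenthetical statement. One small point to check is that the recurrence coefficients for the orthonormal polynomials $\hat p_k$ satisfy $a_{k-1}=c_k$ (symmetry of $A_{d+1}$), so that Lemma~\ref{lem_Ak} applies verbatim — but this was already recorded in the discussion preceding Lemma~\ref{lem_Ak}, so it can simply be cited.

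I do not anticipate a serious obstacle here: the statement is essentially an identification of two matrices already isolated in the preliminaries, and the proof is a two-line chain of citations (Lemma~\ref{lemsummarize} $\Rightarrow$ the eigenvalue reformulation with the orthonormal basis $\Rightarrow$ Lemma~\ref{lem_Ak} $\Rightarrow$ roots of $p_{d+1}$). The only thing to be slightly careful about is the index shift — degree bound $2d$ on $\sigma$ yields a basis $\hat p_0,\ldots,\hat p_d$ of size $d+1$, hence the matrix $A_{d+1}$ and the polynomial $p_{d+1}$ rather than $p_d$ — and the sign bookkeeping in the $f(x)=-x$ case.
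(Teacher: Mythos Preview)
Your proposal is correct and follows essentially the same approach as the paper: invoke Lemma~\ref{lemsummarize} with the orthonormal basis $\{\hat p_0,\ldots,\hat p_d\}$ to identify $\underline{f}^{(d)}_{\mathbf{K}}$ with $\lambda_{\min}(A)$, then apply Lemma~\ref{lem_Ak} (with $k=d+1$) to recognize $A=A_{d+1}$ and conclude via (\ref{eq:det_roots}) that the eigenvalues are the roots of $p_{d+1}$; the $f(x)=-x$ case is handled identically via $\lambda_{\min}(-A)=-\lambda_{\max}(A)$.
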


\begin{proof}
Let $\hat p_0,\ldots,\hat p_{d+1}$  denote  the corresponding orthonormal polynomials, with $\hat p_i=p_i/\sqrt{\langle p_i,p_i\rangle}$.
Consider first $f(x)=x$. Using Lemma \ref{lemsummarize}, we see that \smash{$\underline{f}^{(d)}_{\mathbf{K}}$}  is equal to the smallest eigenvalue  of the  matrix $A$ in (\ref{eqhatA}) (for $k=d+1$),
which coincides with the matrix $A_{d+1}$ in (\ref{matrix_Ak}), so that its smallest eigenvalue is equal to the smallest root of $p_{d+1}$.\\
{Assume now $f(x)=-x$. Then \smash{$\underline{f}^{(d)}_{\mathbf{K}}$} is equal to $\lambda_{\min}(-A)=-\lambda_{\max}(A)$, which in turn is equal to the opposite of the largest root of $p_{d+1}$.}
\ignore{
As recalled earlier,  we may assume that $\hat p_0,\ldots,\hat p_{d+1}$ satisfy  three-terms  recurrence relations of the form
\eqref{eq:recursion}. This permits to compute the entries of the matrix $A$ as follows:
\begin{eqnarray*}
A_{ij} &=& \int_{-1}^1 x\hat p_i(x)\hat p_j(x) w(x)dx \\
       &=&  \int_{-1}^1\left(\alpha_i\hat p_{i+1}(x) + \beta_i\hat p_i(x) + \gamma_i\hat p_{i-1}(x)\right) \hat p_j(x)   w(x)dx \\
       &=& \left\{
       \begin{array}{rl}
       \alpha_i & \mbox{if $j=i+1$} \\
       \beta_i & \mbox{if $j=i$} \\
       \gamma_i & \mbox{if $j=i-1$} \\
       0 & \mbox{otherwise.}
\end{array}\right.
\end{eqnarray*}
Comparing with the matrix in \eqref{matrix_Ak}, one finds $A = A_{d+1}$. Hence,  the smallest eigenvalue of
$A$ is the smallest root of $p_{d+1}$, by \eqref{eq:det_roots}.}
\end{proof}

Recall that $\xi^{\alpha,\beta}_{d+1}$ denotes the smallest root of the Jacobi polynomial $P^{\alpha,\beta}_{d+1}$ and that the largest root of $P^{\alpha,\beta}_{d+1}$ is equal to $-\xi^{\beta,\alpha}_{d+1}$.

\begin{corollary}\label{corboundn1}
Consider  the measure $d\mu(x)=w_{\alpha,\beta}(x)dx$ on $\bK=[-1,1]$ with the weight function $w_{\alpha,\beta}(x) = (1-x)^\alpha(1+x)^\beta$ and $\alpha,\beta>-1$.
For the univariate polynomial $f(x)=x$ {(resp., $f(x)=-x$)},  the parameter
$\smash{\underline{f}^{(d)}_{\mathbf{K}}}$ is equal to \smash{$ \xi^{\alpha,\beta}_{d+1}$}
  {(resp., to \smash{$\xi^{\beta,\alpha}_{d+1}$)}} and thus we have
  $$\underline{f}^{(d)}_{\mathbf{K}}- f_{\min,\bK}=\Theta(1/d^2).$$
  In particular,  \smash{$\underline{f}^{(d)}_{\mathbf{K}}=-\cos\left( \frac{\pi}{2d+2} \right)$}
when $\alpha=\beta=-1/2$.
 \end{corollary}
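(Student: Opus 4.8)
\emph{Proof plan.} The plan is to obtain the statement as a direct consequence of Theorem \ref{thm:main1} together with the facts collected in Section \ref{secprel}. First I would apply Theorem \ref{thm:main1} with the weight function $w=w_{\alpha,\beta}$, which is continuous and positive on $(-1,1)$ since $\alpha,\beta>-1$, and whose associated orthogonal polynomials are by definition the Jacobi polynomials $P^{\alpha,\beta}_k$. For $f(x)=x$ the theorem yields that $\underline{f}^{(d)}_{\mathbf{K}}$ equals the smallest root of $P^{\alpha,\beta}_{d+1}$, which is $\xi^{\alpha,\beta}_{d+1}$ by definition. For $f(x)=-x$ the theorem yields that $\underline{f}^{(d)}_{\mathbf{K}}$ equals the opposite of the largest root of $P^{\alpha,\beta}_{d+1}$; using the reflection symmetry $P^{\alpha,\beta}_k(x)=(-1)^kP^{\beta,\alpha}_k(-x)$ recalled just before Section~\ref{secLas}, the largest root of $P^{\alpha,\beta}_{d+1}$ equals $-\xi^{\beta,\alpha}_{d+1}$, so its opposite is $\xi^{\beta,\alpha}_{d+1}$.

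Next I would read off the convergence rate. In both cases $f_{\min,\bK}=\min_{x\in[-1,1]}(\pm x)=-1$, hence $\underline{f}^{(d)}_{\mathbf{K}}-f_{\min,\bK}$ equals $\xi^{\alpha,\beta}_{d+1}+1$ (resp.\ $\xi^{\beta,\alpha}_{d+1}+1$). By Corollary \ref{cor_root} we have $\xi^{\alpha,\beta}_{d+1}=-1+\Theta(1/(d+1)^2)=-1+\Theta(1/d^2)$, and the same estimate holds with the roles of $\alpha$ and $\beta$ interchanged. This gives $\underline{f}^{(d)}_{\mathbf{K}}-f_{\min,\bK}=\Theta(1/d^2)$ in both cases.

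Finally, for the special case $\alpha=\beta=-1/2$ the Jacobi polynomials $P^{-1/2,-1/2}_k$ are, up to a nonzero scalar, the Chebyshev polynomials $T_k$, hence share the same roots, given explicitly by \eqref{rootTk}. Taking $k=d+1$, the roots are $\cos\left(\tfrac{2i-1}{2d+2}\pi\right)$ for $i=1,\dots,d+1$; since $\cos$ is decreasing on $[0,\pi]$ and the arguments range over $(0,\pi)$, the smallest root is attained at $i=d+1$, namely $\cos\left(\tfrac{2d+1}{2d+2}\pi\right)=\cos\left(\pi-\tfrac{\pi}{2d+2}\right)=-\cos\left(\tfrac{\pi}{2d+2}\right)$, which is the claimed value of $\underline{f}^{(d)}_{\mathbf{K}}$. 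There is no genuine obstacle in this argument; the only points requiring a little care are the bookkeeping of smallest versus largest roots (settled by the Jacobi reflection identity) and verifying that the extreme cosine value occurs at $i=d+1$, both of which are immediate.
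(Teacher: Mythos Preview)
Your proposal is correct and follows exactly the paper's approach: it invokes Theorem~\ref{thm:main1}, the Jacobi reflection identity for the $f(x)=-x$ case, Corollary~\ref{cor_root} for the $\Theta(1/d^2)$ rate, and the explicit Chebyshev roots~\eqref{rootTk} for the special case $\alpha=\beta=-1/2$. The paper's own proof is a one-line reference to these same ingredients, so you have simply written out the details that the paper leaves implicit.
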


\begin{proof}
This follows directly using Theorem \ref{thm:main1}, Corollary \ref{cor_root}, the fact that the largest root of $P^{\alpha,\beta}_{d+1}$ is equal to \smash{$-\xi^{\beta,\alpha}_{d+1}$,}  and  the closed form expression (\ref{rootTk}) for the roots of  the  Chebyshev polynomials of the first kind.
\end{proof}

We now use the  above result to show  $\underline{f}^{(d)}_{\mathbf{K}}- f_{\min,\bK} = \Omega(1/d^2)$ for  the example (\ref{example}) in the multivariate case $n\ge 2$.

\begin{corollary}\label{corboundn}
Consider the measure $d\mu(x) =\prod_{i=1}^n w_{\alpha_i,\beta_i}(x_i) dx_i$ on the hypercube  $\bK=[-1,1]^n$, with the weight functions $w_{\alpha_i,\beta_i}(x_i)=(1-x_i)^{\alpha_i}(1+x_i)^{\beta_i}$ and $\alpha_i,\beta_i>-1$ for $i\in [n]$.
For the polynomial $f(x)=\sum_{l=1}^n c_l x_l$, we have
$$\underline{f}^{(d)}_{\bK}\ge  \sum_{l: c_l>0} c_l\xi^{\alpha_l,\beta_l}_{d+1} +\sum_{l:c_l<0}|c_l| \xi^{\beta_l,\alpha_l}_{d+1},$$
and thus    $\underline{f}^{(d)}_{\mathbf{K}} -f_{\min,\bK}= \Omega(1/d^2)$.
\end{corollary}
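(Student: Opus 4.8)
The plan is to reduce the multivariate bound to a sum of univariate bounds by exploiting the product structure of the measure, following the same philosophy that underlies Corollary~\ref{corboundn1}. First I would restrict the class of admissible density functions $\sigma$ in the definition \eqref{fundr} of $\underline{f}^{(d)}_{\bK}$ to those of separable form: for each variable $l\in[n]$ pick a univariate sum-of-squares polynomial $\sigma_l(x_l)$ of degree at most $2d$ with $\int_{-1}^1 \sigma_l(x_l)w_{\alpha_l,\beta_l}(x_l)\,dx_l=1$, and set $\sigma(x)=\prod_{l=1}^n\sigma_l(x_l)$. This $\sigma$ is a valid (sum-of-squares, degree $\le 2dn$ — here one must be a little careful, so instead pick $\sigma(x)=\sum_{l=1}^n \sigma_l(x_l)\prod_{m\neq l}\tfrac12$, i.e. a convex combination rather than a product, which keeps the degree at $2d$) feasible density. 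Concretely, take $\sigma(x)=\frac1n\sum_{l=1}^n\sigma_l(x_l)$, which is a sum of squares of degree $\le 2d$ and integrates to $1$ against $d\mu$ since each $\sigma_l$ does and the marginals of $\mu$ are the $w_{\alpha_l,\beta_l}$. Evaluating $\int_{\bK}\sigma(x)f(x)\,d\mu(x)$ for $f(x)=\sum_l c_lx_l$, the cross terms $\int x_m\,d\mu$ with $m\neq l$ contribute fixed constants and the whole expression collapses, by the argument in the proof of Theorem~\ref{thm:main1}, to a quantity governed by the one-dimensional problems.

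The cleaner route, which I would actually carry out, is to avoid feasibility bookkeeping entirely and argue on the eigenvalue side via Lemma~\ref{lemsummarize}. Using the orthonormal basis $b_\gamma(x)=\prod_{l=1}^n\hat P^{\alpha_l,\beta_l}_{\gamma_l}(x_l)$ for $\gamma\in N(n,2d)$, which is orthonormal w.r.t.\ the product measure $\mu$, the parameter $\underline{f}^{(d)}_{\bK}$ equals the smallest eigenvalue of the matrix $A$ indexed by $N(n,d)$ with $A_{\gamma,\delta}=\sum_l c_l\int_{\bK} x_l\, b_\gamma b_\delta\, d\mu$. Because $f$ is a sum of single-variable terms and the basis is a product basis, $A$ decomposes: $A=\sum_{l=1}^n c_l\, (I\otimes\cdots\otimes M^{(l)}\otimes\cdots\otimes I)$ restricted to the degree-$\le d$ index set, where $M^{(l)}=(\langle x_l\hat P^{\alpha_l,\beta_l}_i,\hat P^{\alpha_l,\beta_l}_j\rangle)_{i,j}$ is exactly the tridiagonal matrix from Lemma~\ref{lem_Ak}. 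Then $\lambda_{\min}(A)=\min_{\|u\|=1}\langle u,Au\rangle = \sum_l c_l\,\langle u,(I\otimes\cdots\otimes M^{(l)}\otimes\cdots)u\rangle$; bounding each term below by $c_l\,\lambda_{\min}(M^{(l)})=c_l\,\xi^{\alpha_l,\beta_l}_{d+1}$ when $c_l>0$ and by $c_l\,\lambda_{\max}(M^{(l)})=-|c_l|\,\xi^{\beta_l,\alpha_l}_{d+1}$ when $c_l<0$ (using $\lambda_{\max}(M^{(l)})=-\xi^{\beta_l,\alpha_l}_{d+1}$ from the relation $P^{\alpha,\beta}_k(x)=(-1)^kP^{\beta,\alpha}_k(-x)$), and observing that the index-truncation to $N(n,d)$ rather than a full tensor grid only restricts the quadratic form, gives
\[
\underline{f}^{(d)}_{\bK}=\lambda_{\min}(A)\ \ge\ \sum_{l:\,c_l>0}c_l\,\xi^{\alpha_l,\beta_l}_{d+1}+\sum_{l:\,c_l<0}|c_l|\,\xi^{\beta_l,\alpha_l}_{d+1}.
\]

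Having established this inequality, the asymptotic rate follows at once: by Corollary~\ref{cor_root} each $\xi^{\alpha_l,\beta_l}_{d+1}=-1+\Theta(1/d^2)$ and likewise $\xi^{\beta_l,\alpha_l}_{d+1}=-1+\Theta(1/d^2)$, so the right-hand side equals $-\sum_{l=1}^n|c_l|+\Theta(1/d^2)=f_{\min,\bK}+\Omega(1/d^2)$, whence $\underline{f}^{(d)}_{\bK}-f_{\min,\bK}=\Omega(1/d^2)$.

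The main obstacle I anticipate is the interaction between the degree truncation $N(n,d)$ and the tensor-product structure: the matrix $A$ is \emph{not} literally a sum of Kronecker products on a full grid but on a down-set of multi-indices, so I cannot simply invoke $\lambda_{\min}$ of a Kronecker sum. The fix is that this truncation only shrinks the feasible set of unit vectors $u$, hence can only \emph{raise} $\lambda_{\min}$, so the lower bound obtained from the full-grid Kronecker-sum analysis still applies — but one should state this monotonicity carefully, and also check that each $M^{(l)}$ appearing in the truncated problem is still (a principal submatrix of, or exactly) the relevant Jacobi tridiagonal matrix so that its extreme eigenvalues are genuinely $\xi^{\alpha_l,\beta_l}_{d+1}$ and $-\xi^{\beta_l,\alpha_l}_{d+1}$; this is where invoking Lemma~\ref{lem_Ak} and Theorem~\ref{thm:main1} for $n=1$ does the real work. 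A secondary but purely bookkeeping point is handling $c_l=0$ terms, which simply drop out.
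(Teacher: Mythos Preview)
Your first paragraph goes in the wrong direction: restricting the class of admissible densities $\sigma$ in \eqref{fundr} to a subclass (separable products or convex combinations) can only \emph{increase} the infimum, so it yields upper bounds on $\underline{f}^{(d)}_{\bK}$, not the lower bound the statement asks for. Fortunately you abandon this and your actual argument---the eigenvalue route via Lemma~\ref{lemsummarize}---is correct. The decomposition $A=\sum_l c_l A^{(l)}$ with $A^{(l)}$ the restriction to $N(n,d)$ of $I\otimes\cdots\otimes M^{(l)}\otimes\cdots\otimes I$ is right, and your handling of the truncation is fine: each $A^{(l)}$ is a principal submatrix of the full Kronecker term, so by Cauchy interlacing its extreme eigenvalues lie between those of $M^{(l)}$, which by Lemma~\ref{lem_Ak} are $\xi^{\alpha_l,\beta_l}_{d+1}$ and $-\xi^{\beta_l,\alpha_l}_{d+1}$. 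Bounding $\langle u,Au\rangle$ term by term for any unit $u$ then gives the claimed inequality, and the $\Omega(1/d^2)$ conclusion follows from Corollary~\ref{cor_root} exactly as you say.

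The paper takes a different, more analytic route: it fixes an optimal multivariate density $\sigma$ and, for each $l$, integrates out the other $n-1$ variables to obtain a univariate polynomial $\sigma_l(x_l)$ which is nonnegative on $\oR$ (hence a sum of squares, being univariate), has degree $\le 2d$, and integrates to $1$; Corollary~\ref{corboundn1} then bounds $\int_{-1}^1(\pm x_l)\sigma_l w_{\alpha_l,\beta_l}\,dx_l$ below by the appropriate extremal Jacobi root, and summing over $l$ finishes. This marginalization argument avoids any linear-algebraic interlacing considerations and makes the reduction to $n=1$ completely transparent. Your eigenvalue argument is equally valid and perhaps more structural---it makes the tensor nature of the problem explicit---but requires the extra step of controlling the effect of the simplex truncation $N(n,d)$ versus the full grid, which the paper's approach sidesteps entirely.
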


\begin{proof}
Assume $\underline{f}^{(d)}_{\mathbf{K}}=\int_\bK (\sum_{l=1}^n {c_l} x_l)\sigma(x)d\mu(x)$, where $\sigma\in \oR[x_1,\ldots,x_n]_{2d}$ is a sum of squares of polynomials and $\int_\bK \sigma(x)d\mu(x)=1$.
For each $l\in [n]$ consider the univariate polynomial
$$\sigma_l(x_l):= \int_{[-1,1]^{n-1}} \sigma(x_1,\ldots,x_n)\prod_{i\in [n]\setminus \{l\}} w_{\alpha_i,\beta_i}(x_i) dx_i,$$
where we integrate over all variables $x_i$ with $i\in[n]\setminus \{l\}$. Then we have  $\int_{-1}^1\sigma_l(x_l)w_{\alpha_l,\beta_l}(x_l)dx_l=1$. Moreover, $\sigma_l$ has degree at most $2d$ and, as it is a univariate polynomial  which is nonnegative on $\oR$, it  is a sum of squares of polynomials.
Hence, using Corollary \ref{corboundn1}, we can conclude that
$$\int_{-1}^1 x_l\sigma_l(x_l)w_{\alpha_l,\beta_l}(x_l)dx_l \ge \xi^{\alpha_l,\beta_l}_{d+1},\quad
{\int_{-1}^1 (-x_l)\sigma_l(x_l)w_{\alpha_l,\beta_l}(x_l)dx_l \ge \xi^{\beta_l,\alpha_l}_{d+1}.}
$$
Combining with the definition of $\underline{f}^{(d)}_{\mathbf{K}}$ we obtain
$$\underline{f}^{(d)}_{\mathbf{K}} =\sum_{l=1}^n {c_l}\int_{-1}^1 {x_l}\sigma_l(x_l) w_{\alpha_l,\beta_l}(x_l)dx_l \ge
{\sum_{l:c_l>0} c_l  \xi^{\alpha_l,\beta_l}_{d+1}+\sum_{l: c_l<0} |c_l| \xi^{\beta_l,\alpha_l}_{d+1}}$$
and thus $\underline{f}^{(d)}_{\mathbf{K}} -f_{\min,\bK}
 \ge { \sum_{l: c_l>0} c_l(\xi^{\alpha_l,\beta_l}_{d+1}+1) +\sum_{l: c_l<0} |c_l|  (\xi^{\beta_l,\alpha_l}_{d+1}+1)}=\Omega(1/d^2).$
\end{proof}

\ignore{
\begin{proof}
By definition, \smash{$\underline{f}^{(d)}_{\mathbf{K}}$} is the smallest value taken by $\int_\bK f(x)\sigma(x) d\mu (x)$, where $\sigma \in \Sigma[x]_{2d}$ satisfies the constraint $\int_\bK \sigma(x)d\mu(x)=1$.
We use the polynomial basis $\{\prod_{i=1}^n \hat P^{\alpha_i,\beta_i}_{k_i} (x_i)\}$ to express the property that $\sigma$ is a sum of squares of degree $2d$. For this we write
$$\sigma(x)=\sum_{\bh,\bk\in N(n,d)} M^{}_{\bh,\bk} \prod_{i=1}^n \hat P^{\alpha_i,\beta_i}_{h_i} (x_i)\hat P^{\alpha_i,\beta_i}_{k_i} (x_i),$$
where $M$ is a matrix indexed by $N(n,d)$ constrained to be positive semidefinite.
Then, the constraint
$\int_\bK \sigma(x)d\mu(x)=1$ can be rewritten as $\text{\rm Tr}(M^{})=1.$

We can express the objective $\int_\bK (\sum_{l=1}^n x_l)\sigma(x)d\mu(x)$ using the matrices in (\ref{eqAab}), namely,
$$\int_\bK \left(\sum_{l=1}^n x_l\right)\sigma(x)d\mu(x)= \sum_{\bh,\bk\in N(n,d)} M^{}_{\bh,\bk} \sum_{l=1}^n \prod_{i\ne l} \delta_{h_i,k_i}
\underbrace{ \left(\int_{-1}^1 x_l \hat P^{\alpha_l,\beta_l}_{h_l}(x_l) \hat P^{\alpha_l,\beta_l}_{k_l}(x_l)w_{\alpha_l,\beta_l}(x_l)dx_l\right)}_{= (A^{\alpha_l,\beta_l}_d)_{h_l,k_l}}.$$
In other words we have
$$\int_\bK \left(\sum_{l=1}^n x_l\right)\sigma(x)d\mu(x)= \langle \Lambda,M\rangle,$$
after defining the matrix $\Lambda= \sum_{l=1}^n I^{\otimes (l-1)}\otimes A^{\alpha_l,\beta_l}_d\otimes I^{\otimes (n-l)}$.
Therefore, we obtain
$$\underline{f}^{(d)}_{\mathbf{K}} =\min\{\langle \Lambda,M\rangle: \text{\rm Tr}(M)=1, M\succeq 0\}= \lambda_{\min}(\Lambda).$$
By the definition of the matrix $\Lambda$ it follows that $\lambda_{\min}(\Lambda)=  \sum_{l=1}^n \lambda_{\min}(A^{\alpha_l,\beta_l}_d)= \sum _{l=1}^n \xi^{\alpha_l,\beta_l}_{d+1}$. The rest of the claim follows using Corollary \ref{cor_root}.
\end{proof}
}

\subsection{Tight lower bound  for the De Klerk, Hess and Laurent hierarchy}\label{secDKHL}

In this section  we consider the  hierarchy of bounds \smash{$\uuf^{(d)}$} studied by  De Klerk, Hess and Laurent \cite{DHL SIOPT}, 
which are potentially stronger than the bounds \smash{$\underline f^{(d)}_\bK$} since they involve the wider class of density functions   in (\ref{eqSch}). Their convergence rate is known to be $O(1/d^2)$ (\cite{DHL SIOPT}, recall Theorem \ref{theoDKHL}).

For the  example \eqref{example}  we can also give an explicit expression for the bounds \smash{$\uuf^{(d)}$} and we will show that their convergence rate to $f_{\min,\bK}$ is also in the order $\Omega(1/d^2)$, which shows that the analysis in \cite{DHL SIOPT} is tight.

We first treat the univariate case, in order to introduce the main ideas, and then we extend to the multivariate case.

\begin{theorem}\label{thm:rateDKHL}
For the univariate polynomial $f(x)={\pm} x$,  we have
$$\uuf^{(d)} =\min\{\xi^{-1/2,-1/2}_{d+1},\xi^{1/2,1/2}_d\},$$ the smallest value among the smallest roots of the Jacobi polynomials $P^{-1/2,-1/2}_{d+1}$ and  $P^{1/2,1/2}_d$. In particular, we have
 $\uuf^{(d)}-f_{\min,\bK}= \Theta(1/d^2)$.
\end{theorem}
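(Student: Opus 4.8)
\textbf{Proof plan for Theorem~\ref{thm:rateDKHL}.}

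The plan is to mimic the structure of the proof of Theorem~\ref{thm:main1}, now applied to the larger family of density functions in (\ref{eqSch}) with the specific Chebyshev measure (\ref{eqmu}). Consider first $f(x)=x$ in the univariate case. Here the admissible densities are $\sigma(x)=\sigma_\emptyset(x)+\sigma_{\{1\}}(x)(1-x^2)$, where $\sigma_\emptyset\in\Sigma[x]_d$ and $\sigma_{\{1\}}\in\Sigma[x]_{d-1}$ (degree at most $2d$ and $2d-2$ respectively), and $\mu$ has weight $w_{-1/2,-1/2}(x)=(1-x^2)^{-1/2}$. The key observation is that $\sigma_{\{1\}}(x)(1-x^2)$ integrated against $w_{-1/2,-1/2}$ is the same as $\sigma_{\{1\}}(x)$ integrated against $(1-x^2)^{1/2}=w_{1/2,1/2}(x)$, the Chebyshev weight of the second kind. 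So the optimization splits: writing $t_\emptyset=\int_{-1}^1\sigma_\emptyset\,w_{-1/2,-1/2}\,dx\ge 0$ and $t_{\{1\}}=\int_{-1}^1\sigma_{\{1\}}\,w_{1/2,1/2}\,dx\ge 0$, the normalization constraint reads $t_\emptyset+t_{\{1\}}=1$, and the objective is $t_\emptyset\cdot\big(\tfrac{1}{t_\emptyset}\int x\sigma_\emptyset w_{-1/2,-1/2}\big)+t_{\{1\}}\cdot\big(\tfrac{1}{t_{\{1\}}}\int x\sigma_{\{1\}}w_{1/2,1/2}\big)$, i.e.\ a convex combination of two quantities that, by Theorem~\ref{thm:main1} applied to the weights $w_{-1/2,-1/2}$ (with degree bound $2d$, so orthogonal polynomial index $d+1$) and $w_{1/2,1/2}$ (with degree bound $2d-2$, so index $d$), are bounded below by $\xi^{-1/2,-1/2}_{d+1}$ and $\xi^{1/2,1/2}_d$ respectively. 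Hence $\uuf^{(d)}\ge\min\{\xi^{-1/2,-1/2}_{d+1},\xi^{1/2,1/2}_d\}$, and this bound is attained by concentrating all the mass on whichever of the two normalized squared-orthonormal-polynomial densities achieves the minimum root. The case $f(x)=-x$ is symmetric since both weights $w_{-1/2,-1/2}$ and $w_{1/2,1/2}$ are even, so the largest root is the negative of the smallest and the same value results.

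The second step is to deduce the asymptotic rate. By Corollary~\ref{cor_root} both $\xi^{-1/2,-1/2}_{d+1}$ and $\xi^{1/2,1/2}_d$ equal $-1+\Theta(1/d^2)$, hence their minimum is $-1+\Theta(1/d^2)$; since $f_{\min,\bK}=-1$ here, we get $\uuf^{(d)}-f_{\min,\bK}=\Theta(1/d^2)$, as claimed. (For the Chebyshev-of-first-kind part one even has the closed form $\xi^{-1/2,-1/2}_{d+1}=-\cos(\pi/(2d+2))$ from (\ref{rootTk}); for the second-kind part one may use the known roots $\cos(j\pi/(d+1))$ of $P^{1/2,1/2}_d$, but only the $\Theta(1/d^2)$ estimate is needed.)

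The main technical point to get right is the bookkeeping of degrees and orthogonal-polynomial indices: the density $\sigma_\emptyset$ has degree $\le 2d$ so plays the role of a degree-$d$ Lasserre density for $w_{-1/2,-1/2}$, giving index $d+1$; whereas $\sigma_{\{1\}}$ has degree $\le 2d-2$, i.e.\ it is a degree-$(d-1)$ Lasserre density for $w_{1/2,1/2}$, giving index $d$. One must also justify that restricting to a single term in the sum (\ref{eqSch}) — namely taking $\sigma(x)$ to be the square of a scalar multiple of the relevant orthonormal polynomial, times $(1-x^2)$ in the second-kind case — is feasible and achieves the lower bound, so that the inequality is in fact an equality. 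Everything else is a direct invocation of Theorem~\ref{thm:main1} and Corollary~\ref{cor_root}; I do not anticipate a genuine obstacle, only the need to state the splitting of the objective into a convex combination cleanly (taking care of the degenerate cases $t_\emptyset=0$ or $t_{\{1\}}=0$, where the corresponding term simply drops out).
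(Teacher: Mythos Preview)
Your proof is correct and rests on the same key observation as the paper's---that $(1-x^2)\,w_{-1/2,-1/2}(x)=w_{1/2,1/2}(x)$, which decouples the two sum-of-squares pieces into separate Lasserre-type problems for the two Chebyshev weights. The packaging differs slightly: the paper writes out the SDP explicitly in the two orthonormal bases $\{\hat P^{-1/2,-1/2}_k\}$ and $\{\hat P^{1/2,1/2}_k\}$, with positive semidefinite matrices $M^{(0)},M^{(1)}$ subject to $\mathrm{Tr}(M^{(0)})+\mathrm{Tr}(M^{(1)})=1$, and then invokes strong SDP duality to obtain $\uuf^{(d)}=\min\{\lambda_{\min}(A^{-1/2,-1/2}_d),\lambda_{\min}(A^{1/2,1/2}_{d-1})\}$ before applying Lemma~\ref{lem_Ak}. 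Your convex-combination argument sidesteps the explicit SDP/duality step by reducing directly to two instances of Theorem~\ref{thm:main1}, which is a little more economical since that theorem already encapsulates the eigenvalue--to--smallest-root connection. Both routes yield exactly the same closed-form expression and the same $\Theta(1/d^2)$ conclusion via Corollary~\ref{cor_root}; your handling of the degree bookkeeping ($d+1$ versus $d$) and of the case $f(x)=-x$ via evenness of the weights is correct.
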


\begin{proof}
Consider first $f(x)=x.$ We first recall how to compute  \smash{$\uuf^{(d)}$} as an eigenvalue problem.
 By definition,
it is the minimum value of $\int_{-1}^1 x (\sigma_0(x)+\sigma_1(x)(1-x^2))w_{-1/2,-1/2}(x)dx$,
where $\sigma_0\in \Sigma[x]_{2d}$, $\sigma_1\in \Sigma[x]_{2d-2}$ and
$\int_{-1}^1 (\sigma_0(x)+\sigma_1(x)(1-x^2))w_{-1/2,-1/2}(x)dx=1$.
We express the polynomial $\sigma_0$ in the normalized Jacobi (Chebychev) basis {$\{\hat P^{-1/2,-1/2}_k\}$} as
$$\sigma_0=\sum_{i,j=0}^d M^{(0)}_{ij} \hat P^{-1/2,-1/2}_i \hat P^{-1/2,-1/2}_j$$ for some matrix $M^{(0)}$ of order $d+1$, constrained to be positive semidefinite.
Based on the  observation  that
$(1-x^2) w_{-1/2,-1/2}(x)=w_{1/2,1/2}(x)$, we express the polynomial $\sigma_1$ in the normalized Jacobi basis {$\{\hat P^{1/2,1/2}_k\}$} as
$$\sigma_1=\sum_{i,j=0}^{d-1} M^{(1)}_{ij} \hat P^{1/2,1/2}_i \hat P^{1/2,1/2}_j$$
for some matrix $M^{(1)}$ of order $d$, also constrained to be positive semidefinite.
Then, we obtain
$$\uuf^{(d)}= \min \{\langle A^{-1/2,-1/2}_d,M^{(0)}\rangle +\langle A^{1/2,1/2}_{d-1},M^{(1)}\rangle: \text{\rm Tr}(M^{(0)})+\text{\rm Tr}(M^{(1)})=1,\ M^{(0)}\succeq 0, M^{(1)}\succeq 0\},$$
{where $A^{1/2,1/2}_d$ and $A^{-1/2,-1/2}_{d-1}$ are instances of (\ref{eqhatA}) defined as follows: }
\begin{equation*}\label{eqAab}
A^{\alpha,\beta}_d:=\left( \int_{-1}^1x \hat P^{\alpha,\beta}_h(x)  \hat P^{\alpha,\beta}_k(x) w_{\alpha,\beta}(x) dx\right)_{h,k=0}^d
\end{equation*}
{for any $\alpha,\beta>-1$ and $d\in\oN$.}
Since strong duality holds  we obtain
$$\uuf^{(d)} =\max\{t: A^{-1/2,-1/2}_d-tI\succeq 0,\ A^{1/2,1/2}_{d-1} -tI\succeq 0\}= \min \{\lambda_{\min}(A^{-1/2,-1/2}_d),\lambda_{\min}(A^{1/2,1/2}_{d-1})\}.$$
By Lemma \ref{lem_Ak}, we have $\lambda_{\min}(A^{-1/2,-1/2}_d)=\xi^{-1/2,-1/2}_{d+1}$
and $\lambda_{\min}(A^{1/2,1/2}_{d-1})=\xi^{1/2,1/2}_d$ and thus
$\uuf^{(d)} = \min\{\xi^{-1/2,-1/2}_{d+1},\xi^{1/2,1/2}_d\}$. {The same result holds when $f(x)=-x$.}
Finally, by Corollary \ref{cor_root}, these two smallest roots are both equal to  $-1+\Theta(1/d^2)$, which concludes the proof.
\end{proof}

We now extend this result to   the multivariate case of example \eqref{example}:

\begin{corollary}\label{corbounddKHLn}
For the linear polynomial $f(x)=\sum_{l=1}^n c_lx_l$, we have $$\uuf^{(d)} \ge {\left(\sum_{l=1}^n|c_l|\right)} \min\{\xi^{-1/2,-1/2}_{d+1},\xi^{1/2,1/2}_d\}$$ and thus
$\uuf^{(d)} -f_{\min,\bK}= \Omega(1/d^2).$

\end{corollary}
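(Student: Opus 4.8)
The plan is to mimic the reduction used in the proof of Corollary \ref{corboundn} for the Lasserre bounds, combining it with the univariate result just established in Theorem \ref{thm:rateDKHL}. Suppose \smash{$\uuf^{(d)}$} is attained at a density function $\sigma$ of the Schm\"udgen-type form \eqref{eqSch}, i.e.\ $\sigma(x)=\sum_{I\subseteq[n]}\sigma_I(x)\prod_{i\in I}(1-x_i^2)$ with each $\sigma_I$ a sum of squares of degree at most $2d-2|I|$, normalized so that $\int_\bK\sigma(x)d\mu(x)=1$ for the Chebyshev product measure $d\mu(x)=\prod_{i=1}^n w_{-1/2,-1/2}(x_i)\,dx_i$. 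The key step is, for each fixed coordinate $l\in[n]$, to integrate $\sigma$ out over all the other variables $x_i$, $i\neq l$, against the univariate Chebyshev weights $w_{-1/2,-1/2}(x_i)$, producing a univariate function
$$\tau_l(x_l):=\int_{[-1,1]^{n-1}}\sigma(x_1,\dots,x_n)\prod_{i\in[n]\setminus\{l\}}w_{-1/2,-1/2}(x_i)\,dx_i.$$
One checks that $\int_{-1}^1\tau_l(x_l)w_{-1/2,-1/2}(x_l)\,dx_l=1$. The crucial structural point is that $\tau_l$ is again a \emph{univariate} admissible density of the form $\tau_l(x_l)=\tau_{l,\emptyset}(x_l)+\tau_{l,\{l\}}(x_l)(1-x_l^2)$ with $\tau_{l,\emptyset},\tau_{l,\{l\}}$ sums of squares of the right degrees: indeed, grouping the terms of $\sigma$ according to whether $l\in I$, one has $\sigma=\sum_{I\not\ni l}\sigma_I\prod_{i\in I}(1-x_i^2) + (1-x_l^2)\sum_{I\ni l}\sigma_I\prod_{i\in I\setminus\{l\}}(1-x_i^2)$, and integrating out the other variables preserves nonnegativity on $\oR$ of each of the two groups as a function of $x_l$ alone (a nonnegative univariate polynomial is a sum of squares), while $(1-x_i^2)\ge 0$ on $[-1,1]$ and the weights are positive there; the degree bounds $2d$ and $2d-2$ are clearly maintained.

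Next, by the definition of \smash{$\uuf^{(d)}$} in the univariate case and Theorem \ref{thm:rateDKHL} (applied with $f(x_l)=x_l$, resp.\ $f(x_l)=-x_l$), one gets for every $l$
$$\int_{-1}^1 x_l\,\tau_l(x_l)\,w_{-1/2,-1/2}(x_l)\,dx_l\ \ge\ \min\{\xi^{-1/2,-1/2}_{d+1},\xi^{1/2,1/2}_d\}=:\rho_d,$$
and likewise $\int_{-1}^1(-x_l)\tau_l(x_l)w_{-1/2,-1/2}(x_l)\,dx_l\ge\rho_d$. Since by Fubini $\int_\bK x_l\sigma(x)\,d\mu(x)=\int_{-1}^1 x_l\tau_l(x_l)w_{-1/2,-1/2}(x_l)\,dx_l$, summing $c_l$ times these inequalities over $l$ (using the first bound when $c_l>0$ and the second when $c_l<0$) yields
$$\uuf^{(d)}=\sum_{l=1}^n c_l\int_{-1}^1 x_l\tau_l(x_l)w_{-1/2,-1/2}(x_l)\,dx_l\ \ge\ \Big(\sum_{l=1}^n|c_l|\Big)\rho_d.$$
Finally, since $f_{\min,\bK}=-\sum_{l=1}^n|c_l|$ and, by Corollary \ref{cor_root}, $\rho_d=-1+\Theta(1/d^2)$, we obtain $\uuf^{(d)}-f_{\min,\bK}\ge\big(\sum_l|c_l|\big)(\rho_d+1)=\Omega(1/d^2)$, as claimed.

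I expect the only genuinely delicate point to be the verification that integrating out the $n-1$ remaining variables really does return a \emph{univariate} density of the admissible form \eqref{eqSch} with $n=1$ — in particular that the two partial sums of $\sigma$ (over $I\ni l$ and $I\not\ni l$), after the partial integration, are genuine sums of squares in $x_l$. This uses that they are univariate and nonnegative on all of $\oR$ (not just on $[-1,1]$), which holds because for $i\ne l$ the factors $(1-x_i^2)$ are nonnegative on the domain of integration and the weights $w_{-1/2,-1/2}(x_i)$ are positive on $(-1,1)$, so the integral of a nonnegative integrand is a nonnegative function of $x_l$; combined with the classical fact that a nonnegative univariate polynomial is a sum of squares, this closes the gap. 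Everything else is a routine Fubini computation plus the already-proved univariate Theorem \ref{thm:rateDKHL} and Corollary \ref{cor_root}.
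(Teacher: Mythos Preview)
Your proposal is correct and follows essentially the same approach as the paper: fix a coordinate $l$, split the sum over $I$ according to whether $l\in I$, integrate out the remaining $n-1$ variables against their Chebyshev weights to obtain a univariate density of the form $\tau_{l,0}(x_l)+(1-x_l^2)\tau_{l,1}(x_l)$, and then invoke Theorem~\ref{thm:rateDKHL} and Corollary~\ref{cor_root}. Your discussion of the ``delicate point''---that the two integrated pieces are nonnegative univariate polynomials on all of $\oR$ (hence sums of squares) of degrees at most $2d$ and $2d-2$ respectively---is exactly the justification the paper also relies on.
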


\begin{proof}
The proof is analogous to that of Corollary \ref{corboundn},  with some more technical details.
Assume $\uuf^{(d)}=\int_\bK (\sum_{l=1}^n x_l)\sigma(x)d\mu(x)$, where $\sigma(x)=\sum_{I\subseteq [n]}\sigma_I(x)\prod_{i\in I}(1-x_i^2)$, $\sigma_I(x)$ is a sum of squares of degree at most $2d-2|I|$ and $\int_\bK \sigma(x)d\mu(x)=1$.

Fix $l\in [n]$. Then we can write
$$\sigma(x)=  \sum_{I\subseteq [n]\setminus\{l\}} \sigma_I(x)\prod_{i\in I}(1-x_i^2)
\ + \ (1-x_l^2)
  \sum_{I\subseteq [n]: l\in I} \sigma_I(x)\prod_{i\in I\setminus\{l\}}(1-x_i^2).
$$
Next,  define the univariate polynomials  in the variable $x_l$:
$$\sigma_{l,0}(x_l):=\sum_{I\subseteq [n]\setminus\{l\}} \int_{[-1,1]^{n-1}} \sigma_I(x)\prod_{i\in I}(1-x_i^2) \prod_{i\in [n]\setminus \{l\}} w_{-1/2,-1/2}(x_i)dx_i,$$
$$\sigma_{l,1}(x_l):=\sum_{I\subseteq [n]: l\in I} \int_{[-1,1]^{n-1}} \sigma_I(x)\prod_{i\in I{\setminus\{l\}}}(1-x_i^2) \prod_{i\in [n]\setminus \{l\}} w_{-1/2,-1/2}(x_i)dx_i,$$
$$\sigma_l(x_l):= \int_{[-1,1]^{n-1}} \sigma(x) \prod_{i\in [n]\setminus\{l\}} w_{-1/2,-1/2}(x_i)dx_i
= \sigma_{l,0}(x_l)+ (1-x_l^2)\sigma_{l,1}(x_l).$$
By construction, we have
$$\int_\bK x_l\sigma(x)d\mu(x)= \int_{-1}^1 x_l\sigma_l(x_l)w_{-1/2,-1/2}(x_l)dx_l,\ \
\int_{-1}^1 \sigma_l(x_l) w_{-1/2,-1/2}(x_l)dx_l= \int_\bK \sigma(x)d\mu(x)=1.$$
Moreover, the polynomial $\sigma_{l,0}$ is a sum of squares (since it is  univariate and nonnegative on $\oR$) and its degree is at most $2d$,
and the polynomial $\sigma_{l,1}$ is a sum of squares of degree at most $2d-2$.
Hence, using Theorem \ref{thm:rateDKHL}, we can conclude that
$$\int_{-1}^1 ({\pm} x_l)\sigma_l(x_l)w_{-1/2,-1/2}(x_l)dx_l \ge \min\{\xi^{-1/2,-1/2}_{d+1},\xi^{1/2,1/2}_d\}.$$
This implies that
$$
\uuf^{(d)} =\int_\bK (\sum_{l=1}^nc_l x_l)\sigma(x)d\mu(x)
= \sum_{l=1}^n c_l \int_{-1}^1 x_l\sigma_l(x_l)w_{-1/2,-1/2}(x_l)dx_l $$ is at least ${ (\sum_l|c_l|)} \min\{\xi^{-1/2,-1/2}_{d+1},\xi^{1/2,1/2}_d\}$
and the proof is complete.
\end{proof}

\ignore{
\begin{proof}
By definition, the parameter {$\uuf^{(d)}$} is the smallest value of $\int_\bK f(x)\sigma(x)d\mu(x)$, where $\mu$ is the measure on
$\bK=[-1,1]^n$ with $d\mu(x)=\prod_{i=1}^n w_{-1/2,-1/2}(x_i)dx_i$,   $\sigma(x)=\sum_{I\subseteq [n]} \sigma_I(x) \prod_{i\in I}(1-x_i^2)$ with  $\sigma_I\in \Sigma[x]_{2d-2|I|}$, and $\int_\bK \sigma(x) d\mu(x)=1$.
As in the previous proof  we  exploit the observation that $(1-x_i^2)w_{-1/2,-1/2}(x_i)=w_{1/2,1/2}(x_i)$. Then, given $I\subseteq [n]$, we express the polynomial $\sigma_I$ using  the  polynomial basis
$\{\prod_{i\in I}\hat P^{1/2,1/2}_{k_i} \prod_{i\in \overline I} \hat P^{-1/2,-1/2}_{k_i}: \bk=(k_1,\ldots,k_n)\in \oN^n\}$.
 So we write
$$\sigma_I(x)= \sum_{\bh,\bk\in N(n,d-|I|)} M^{(I)}_{\bh,\bk} \prod_{i\in I}\hat P^{1/2,1/2}_{h_i}(x)\hat P^{1/2,1/2}_{k_i} (x) \prod_{i\in [n]\setminus  I} \hat P^{-1/2,-1/2}_{h_i}(x) \hat P^{-1/2,-1/2}_{k_i}(x)
$$
for some matrix $M^{(I)}$ indexed by $N(n,d-|I|)$, constrained to be positive semidefinite. Then we have
$$\int_\bK \sigma(x)d\mu(x)=\sum_{I\subseteq [n] }\int_\bK \sigma_I(x) \prod_{i\in I}(1-x_i^2)d\mu(x)=  \sum_{I\subseteq [n]}  \sum_{\bh,\bk\in N(n,d-|I|)} M^{(I)}_{\bh,\bk}\delta_{\bh,\bk}= \sum_{I\subseteq [n]} \text{\rm Tr}(M^{(I)}).$$
Next, the objective function reads:
$$\int_\bK (\sum_{l=1}^n x_l)\sigma(x)d\mu(x)=
 \sum_{I\subseteq [n]}\sum_{l=1}^n \int_\bK x_l \sigma_I(x)\prod_{i\in I}(1-x_i^2)d\mu(x).$$
Given a subset $I\subseteq [n]$ and $l\in [n]$,  one can check that
$$ \int_\bK x_l \sigma_I(x)\prod_{i\in I}(1-x_i^2)d\mu(x)=
 \sum_{\bh,\bk\in N(n,d-|I|)} M^{(I)}_{\bh,\bk} (A^{\epsilon,\epsilon}_{d-|I|})_{h_lk_l} \prod_{i\ne l}\delta_{h_i,k_i} ,$$
where $\epsilon=1/2$  if $l\in I$ and $\epsilon=-1/2$ if $l\not\in I$,  and $A^{\epsilon,\epsilon}_{d-|I|}$ is as in (\ref{eqAab}).
Therefore we obtain
$$\int_\bK (\sum_{l=1}^n x_l)\sigma(x)d\mu(x)=\sum_{I\subseteq [n]}  \langle \Lambda^{(I)},M^{(I)}\rangle,$$
after defining the matrices
$$\Lambda^{(I)}= \sum_{l\in I} I^{\otimes (l-1)}\otimes A^{1/2,1/2}_{d-|I|}  \otimes I^{\otimes (n-l)}
+\sum_{l\in [n]\setminus I} I^{\otimes (l-1)}\otimes A^{-1/2,-1/2}_{d-|I|}  \otimes I^{\otimes (n-l)}.$$
Summarizing we have shown
$$\uuf^{(d)}=\min\left\{\sum_{I\subseteq [n]}\langle \Lambda^{(I)},M^{(I)}\rangle:  \sum_{I\subseteq [n]} \text{\rm Tr}(M^{(I)}=1, M^{(I)}\succeq 0\right\}
=\min\{\lambda_{\min}(\Lambda^{(I)}): I\subseteq [n]\}.$$
By the definition of the matrix $\Lambda^{(I)}$ it follows that
$$\lambda_{\min}(\Lambda^{(I)})= |I|\lambda_{\min}(A^{1/2,1/2}_{d-|I|})+(n-|I|)\lambda_{\min}(A^{-1/2,-1/2}_{d-|I|}) =
|I|\xi^{1/2,1/2}_{d-|I|+1}+(n-|I|)\xi^{-1/2,-1/2}_{d-|I|+1}$$ is equal to $-n +\Theta(1/d^2)$.
From this follows that $\uuf^{(d)}-f_{\min,\bK}= \Theta(1/d^2).$
\end{proof}

}

\section{Tight upper bounds  for the Lasserre hierarchy} \label{secupper}

In this section we analyze the rate of convergence of the Lasserre bounds $\underline{f}^{(d)}_{\mathbf{K}}$
when using the measure $d\mu(x)=\prod_{i=1}^n w_{-1/2,-1/2}(x_i)dx_i$ on the box $\bK=[-1,1]^n$ (corresponding to the Chebyshev orthogonal polynomials). For this measure, it is known that the stronger bounds $f^{(d)}$ - that use a much richer class of density functions -
enjoy a $O(1/d^2)$ rate of convergence (\cite{DHL SIOPT}, see Theorem \ref{theoDKHL}). We show that the convergence rate remains $O(1/d^2)$ for the weaker bounds $\underline{f}^{(d)}_{\mathbf{K}}$, which thus also implies Thoerem \ref{theoDKHL}.

\begin{theorem}\label{theoLasrate}
Consider the measure $d\mu(x)=\prod_{i=1}^n w_{-1/2,-1/2}(x_i)dx_i$ on the hypercube $\bK=[-1,1]^n$, with the weight function $w_{-1/2,-1/2}(x_i)=(1-x_i^2)^{-1/2}$ for $i\in [n]$. For any polynomial $f$ we have
$$\underline{f}^{(d)}_{\mathbf{K}} -f_{\min,\bK} =O(1/d^2).$$
\end{theorem}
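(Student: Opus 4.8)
The plan is to reduce the multivariate claim to a univariate statement and then exploit the closed-form eigenvalue description from Lemma~\ref{lemsummarize}. First I would observe that it suffices to prove the bound for the hypercube in a single variable: indeed, the Chebyshev-type measure $d\mu(x)=\prod_{i=1}^n w_{-1/2,-1/2}(x_i)dx_i$ is a product measure, and the orthonormal basis of $\oR[x_1,\dots,x_n]_{2d}$ factors as products $\prod_{i=1}^n \hat T_{k_i}(x_i)$ of one-variable Chebyshev polynomials. So one should be able to build a good density function $\sigma$ on $[-1,1]^n$ as a product of univariate densities, reducing the estimate of $\underline f^{(d)}_\bK - f_{\min,\bK}$ to the univariate case (up to a dimension-dependent constant, which is fine since $n$ is fixed). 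More precisely, writing $x^\star$ for a minimizer of $f$ on $\bK$ and using a Taylor-type bound $f(x) - f(x^\star) \le L\sum_i (x_i - x_i^\star)$-ish majorant, I would take $\sigma(x) = \prod_{i=1}^n \sigma_i(x_i)$ where each $\sigma_i$ is a univariate sum-of-squares density (of degree $\le 2d$) concentrated near $x_i^\star \in [-1,1]$, and estimate $\int_\bK (f - f_{\min,\bK})\sigma\, d\mu$ term by term.

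The univariate core is then: for the measure $w_{-1/2,-1/2}$ on $[-1,1]$, there is a sum-of-squares polynomial $\sigma$ of degree $\le 2d$ with $\int_{-1}^1 \sigma\, w_{-1/2,-1/2} = 1$ and $\int_{-1}^1 (x - x^\star)\sigma(x) w_{-1/2,-1/2}(x)\,dx = O(1/d^2)$ for any $x^\star \in [-1,1]$, with more generally a bound $\int_{-1}^1 |x-x^\star|^{?}\sigma\, w = O(1/d^2)$ controlling the variance of the density around $x^\star$. The cleanest route to this is to use the results already proved in Section~\ref{secLas}: by Corollary~\ref{corboundn1}, the optimal univariate density for $f(x)=\pm x$ achieves exactly $\xi^{\mp1/2,\mp1/2}_{d+1} = -1 + \Theta(1/d^2)$, and the optimal density is a square $\sigma = (\sum_\alpha u_\alpha x^\alpha)^2$ coming from the eigenvector of the Jacobi/Chebyshev matrix $A_{d+1}$ for its smallest root (as noted right after Lemma~\ref{lemsummarize}). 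So for endpoint targets $x^\star = \pm 1$ one already has an explicit good density. For interior $x^\star$, I would shift/translate: apply an affine change of variables mapping $[-1,1]$ so that $x^\star$ becomes an endpoint, or directly use the fact that the Chebyshev-extremal density localizes at a distance $\Theta(1/d^2)$ from the nearest endpoint, and combine localization at both endpoints, or better, use a Fejér-kernel-type construction $\sigma(x) \propto \big(\frac{T_{d+1}(x) - T_{d+1}(x^\star)}{x - x^\star}\big)^2 w(x)^{?}$ whose mass concentrates in an interval of width $\Theta(1/d^2)$ (measured in the $\arccos$ coordinate) around $x^\star$; this is the standard Chebyshev analogue of the Fejér kernel and its first moment about $x^\star$ is $O(1/d^2)$.

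Concretely I would carry out the steps as follows. (1) Fix a global minimizer $x^\star=(x_1^\star,\dots,x_n^\star)\in[-1,1]^n$ of $f$ and write $f(x)-f_{\min,\bK}=\sum_i g_i(x)$ in a way (e.g.\ by a telescoping/Taylor argument) that lets each term be bounded by $C_f(x_i-x_i^\star)^2 + C_f'|x_i - x_i^\star|$ on $\bK$; alternatively bound $f(x)-f_{\min,\bK}\le C_f\sum_i (x_i-x_i^\star)^2$ plus a linear piece, using that the gradient vanishes or points outward at $x^\star$, but since $f$ is arbitrary the safest is the mixed bound. (2) For each coordinate build a univariate SOS density $\sigma_i$ of degree $\le 2d$ with $\int \sigma_i w_{-1/2,-1/2}=1$, $\int |x_i-x_i^\star|\,\sigma_i\, w_{-1/2,-1/2}=O(1/d^2)$ and $\int (x_i-x_i^\star)^2\sigma_i\, w_{-1/2,-1/2}=O(1/d^2)$ — the second moment bound is actually the same order because the density lives on an interval of length $O(1/d^2)$ in the natural coordinate near an interior point it is $O(1/d)$ wide, so I must be careful: near an interior $x^\star$ the Chebyshev mesh has spacing $\Theta(1/d)$, giving only $O(1/d^2)$ for the second moment but $O(1/d)$ for the first — which is not good enough. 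This is the main obstacle: one needs the \emph{first} moment (not the square root of the second) to be $O(1/d^2)$ uniformly over $x^\star\in[-1,1]$, which fails for a symmetric bump at an interior point. The resolution, which I expect is what the authors do, is that for a linear function the correct comparison is with a \emph{one-sided} target: $f_{\min,\bK}$ is attained at a vertex, all minimizers lie at $x_i^\star=\pm1$ (for the extremal/worst examples), and for general $f$ one localizes at the true minimizer where $f$ is, to second order, a convex quadratic, so that $f(x)-f_{\min,\bK}\le C_f\sum_i(x_i-x_i^\star)^2$ with \emph{no} linear term — and then an interior-localized density with second moment $O(1/d^2)$ suffices. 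So step (2) becomes: a univariate SOS density of degree $\le 2d$ for $w_{-1/2,-1/2}$ with $\int (x-x^\star)^2\sigma\,w = O(1/d^2)$ uniformly in $x^\star$, which follows from the Chebyshev–Fejér kernel estimate (spacing $\Theta(1/d)$ at interior points, hence second moment $\Theta(1/d^2)$) and, at endpoints, from the Corollary~\ref{corboundn1} density (which has first \emph{and} second moment about $\pm1$ of order $O(1/d^2)$). (3) Set $\sigma=\prod_i\sigma_i$, check it is SOS of degree $\le 2d$, has $\int_\bK\sigma\,d\mu=1$, and compute $\int_\bK(f-f_{\min,\bK})\sigma\,d\mu\le C_f\sum_i\int(x_i-x_i^\star)^2\sigma_i\,w_{-1/2,-1/2}\cdot\prod_{j\ne i}1 = O(1/d^2)$, using that cross terms vanish by the normalization. (4) Conclude $\underline f^{(d)}_\bK - f_{\min,\bK}=O(1/d^2)$, and note this re-proves Theorem~\ref{theoDKHL} since $\uuf^{(d)}\le\underline f^{(d)}_\bK$. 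The genuinely delicate point, as flagged, is the uniform-in-$x^\star$ second-moment estimate for the univariate Chebyshev density, together with the reduction handling a general (non-vertex) minimizer, where one must use the second-order behaviour of $f$ near $x^\star$ rather than a crude Lipschitz bound.
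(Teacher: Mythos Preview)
Your high-level reduction matches the paper exactly: pass to the separable quadratic upper estimator $g(x)=f(a)+\nabla f(a)^T(x-a)+C_f\|x-a\|_2^2$, which has $g_{\min,\bK}=f_{\min,\bK}$, and then use a product density $\sigma=\prod_i\sigma_i$ to reduce to $n$ univariate quadratic problems (the paper's Corollary~\ref{corquadsep} and the paragraph following it). The degree loss by a factor $n$ is handled just as you say.

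Where you diverge is in the univariate core. The paper does \emph{not} construct a Fej\'er-type kernel. Instead, for $f(x)=x^2+\alpha x$ on $[-1,1]$ with the Chebyshev weight, it uses Lemma~\ref{lemsummarize} to write $\underline f^{(d)}_\bK=\lambda_{\min}(A_d)$ where $A_d$ is the explicit 5-diagonal matrix~(\ref{eqAdLas}). Deleting the first two rows/columns leaves a symmetric Toeplitz block $B$, which is then embedded in a $(d{+}1)\times(d{+}1)$ circulant matrix $C_d$. Cauchy interlacing gives $\lambda_{\min}(A_d)\le\lambda_{\min}(B)\le\lambda_3(C_d)$, and the circulant eigenvalues are explicitly $\cos^2\vartheta_j+\alpha\cos\vartheta_j$ with $\vartheta_j=2\pi j/(d{+}1)$; a one-line Taylor argument at $\vartheta=\arccos(-\alpha/2)$ then yields $\lambda_3(C_d)=-\alpha^2/4+O(1/d^2)$. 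The boundary case $|\alpha|\ge2$ is dispatched beforehand by the linear upper estimator $g(x)=\alpha x+1$ and Corollary~\ref{corboundn1}. This covers interior and endpoint minimizers uniformly without an explicit density construction.

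Your kernel route is sound in outline but you leave the key uniform second-moment bound as an assertion, and you need the interior/endpoint case split. The paper's circulant trick is shorter and fully explicit, though tied to the Chebyshev weight (the authors flag that extending to general Jacobi weights looks harder). Your approach, if completed, would likely be more portable across weights. One small correction: your claim ``no linear term'' holds only when $a_i\in(-1,1)$; at $a_i=\pm1$ the component $\partial_if(a)$ need not vanish, so your endpoint density must absorb a genuine linear contribution --- you did notice this, and it works because $(x_i\mp1)^2\le 2|x_i\mp1|$ on $[-1,1]$, so the Corollary~\ref{corboundn1} density controls both moments.
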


\noindent
It  turns out that we can reduce the general result to the univariate quadratic case.
In what follows we consider first  the special case when $f$ is univariate  and quadratic (see Lemma \ref{lemquaduni}) and then we indicate how to derive the result for an arbitrary multivariate polynomial $f$. A key tool we use for this reduction is the existence of a quadratic upper estimator for $f$ having the same minimum as $f$ over $\bK$.
In the quadratic univariate case we exploit  again the formulation of $\underline{f}^{(d)}_{\mathbf{K}}$ in terms of the smallest eigenvalue of the associated matrix $A_d$ in  (\ref{eqAdLas}) (recall Lemma \ref{lemsummarize}). This matrix $A_d$ is now 5-diagonal, but a key feature is that it contains a large Toeplitz submatrix, whose eigenvalues can be estimated by embedding it into a circulant matrix for which closed form expressions exist for the eigenvalues. This nice structure, which allows a simple analysis,  follows from the choice of the Chebyshev type measure. We expect that a similar
convergence rate should hold when selecting any measure of Jacobi type, but the analysis seems more complicated.

\subsection{The quadratic univariate case}

Here we consider  the case when $\bK=[-1,1]$ and $f$ is a univariate quadratic polynomial of the form
$f(x)=x^2+\alpha x$, for some scalar $\alpha\in\oR$.

We can first easily deal with the case when $\alpha\not\in (-2,2)$.  Indeed then we have
$$f(x)\le g(x):= \alpha x+1 \quad \text{ for all } x\in [-1,1],$$
and both $f$ and $g$ have the same minimum value on $[-1,1]$. Namely, $f_{\min,\bK}=g_{\min,\bK}$ is equal to $1-\alpha$ if $\alpha \ge 2$, and to $1+\alpha$ if $\alpha \le -2$. Therefore we have
$$\underline{f}^{(d)}_{\mathbf{K}} -f_{\min,\bK}\le \underline{g}^{(d)}_{\mathbf{K}} - g_{\min,\bK} =O(1/d^2),$$
where we use  Corollary \ref{corboundn} for the last estimate.

\medskip We may now assume that $f(x)=x^2+\alpha x$, where $\alpha\in [-2,2]$. Then, $f_{\min,\bK}= -\alpha^2/4$, which is  attained at $x=-\alpha /2$.
After scaling  the measure $\mu$ by $2/\pi$,
the Chebyshev polynomials $T_i$  satisfy
$$\int_{-1}^1 T_i(x)T_j(x) {2\over \pi \sqrt {1-x^2}} dx= 0 \text{ if } i\ne j, \ 2 \text{ if } i=j=0,\ 1 \text{ if } i=j\ge 1.$$
So with respect to this scaled measure the normalized Chebyshev polynomials are $\hat T_0=1/\sqrt 2$ and $\hat T_i=T_i$ for $i\ge 1$, and they satisfy the 3-terms relation:
$$x\hat T_1= {1\over 2}\hat T_2+{1\over \sqrt 2}\hat T_0 \quad \text{ and } \quad x \hat T_{k} ={1\over 2}\hat T_{k+1} +{1\over 2}\hat T_{k-1} \ \text{ for } k\ge 2.$$
In view of Lemma \ref{lemsummarize} we know that the parameter $\underline{f}^{(d)}_{\mathbf{K}}$  is equal to the smallest eigenvalue of the following matrix
$$A_d =\left(\int_{-1}^1 (x^2+\alpha x) \hat T_i(x)\hat T_j(x) {2\over \pi \sqrt {1-x^2}} dx\right)_{i,j=0}^d.$$
Using the above 3-terms relations one can verify that the matrix $A_d$ has the following form:

  \begin{equation}\label{eqAdLas}
A_d=\left(\begin{matrix}
{1\over 2} & {\alpha\over \sqrt 2} & {1\over 2\sqrt 2} & &&&&& \cr
{\alpha\over \sqrt 2} & {3\over 4} & {\alpha\over 2} & {1\over 4} & &&&& \cr
{1\over 2\sqrt 2} & {\alpha\over 2} & a& b & c & &&&\cr
                & {1\over 4} & b & a& b& c & && \cr
                & & c &b & \ddots & \ddots & \ddots & &\cr
                &&& c & \ddots & \ddots & \ddots & \ddots & \cr
               & &&&\ddots & \ddots & \ddots & \ddots  & c\cr
               & &&&&\ddots& \ddots & \ddots  & b \cr
                &&&&&& c& b & a
                \end{matrix}\right),
                 \end{equation}
where we set $a=1/2$, $b=\alpha/2$ and $c=1/4$.

Observe that if we remove the first two rows and columns of $A$ then we obtain a principal submatrix, denoted  $B$, which
  is a symmetric 5-diagonal Toeplitz matrix.
 Now we may embed $B$ into a symmetric circulant matrix of size $d+1$,
 denoted $C_d$, by suitably defining the first two rows and columns. Namely,
     $$
C_d=\left(\begin{matrix}
a & b & c & &&&&c& b\cr
b & a & b & c & &&&&c \cr
c & b & a & b & c & &&&\cr
                & c & b & a & b &c & && \cr
                & & c & b & \ddots & \ddots & \ddots & &\cr
                &&& c & \ddots & \ddots & \ddots & \ddots & \cr
               & &&&\ddots & \ddots & \ddots & \ddots  & c\cr
              c& &&&&\ddots& \ddots & \ddots  & b \cr
               b& c &&&&& c & b & a
                \end{matrix}\right).
                 $$
Recall that the eigenvalues of a circulant matrix are known in closed form, see, e.g.,\ \cite{circulant matrices}.
In particular, the eigenvalues of $C_d$ are given by
\begin{equation}
\label{eigs symmetric circulant}
 a + 2b\cos(2\pi j/(d+1)) + 2c\cos(2\pi 2j/(d+1), \quad j = 0,\ldots,d, \quad (d \ge 5).
\end{equation}

                 By the Cauchy interlacing theorem for eigenvalues (see, e.g.,\ Corollary 2.2 in \cite{Haemers95}),   we have
$$\underline{f}^{(d)}_{\mathbf{K}}=\lambda_{\min}(A_d)\le \lambda_{\min}(B) \le \lambda_3(C_d),$$ where $\lambda_3(C_d)$ is the third smallest eigenvalue of $C_d$.
{As noted above the eigenvalues of $C_d$ are known in closed form as in (\ref{eigs symmetric circulant}) and this is the key fact which enables us to conclude the analysis.}

\begin{lemma}\label{lemquaduni}
For any  $\alpha\in [-2,2]$, the third smallest eigenvalue of the matrix $C_d$ satisfies
$$\lambda_3(C_d)= -{\alpha^2\over 4} +O\left({1\over d^2}\right).$$
Therefore, if $f(x)=x^2+\alpha x$ with $\alpha \in [-2,2]$ then $\underline{f}^{(d)}_{\mathbf{K}}-f_{\min,\bK}=O(1/d^2)$.
\end{lemma}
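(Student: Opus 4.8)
The plan is to exploit the closed‑form spectrum of the circulant matrix $C_d$ from (\ref{eigs symmetric circulant}) together with an elementary estimate on how well the equally spaced angles $\theta_j:=2\pi j/(d+1)$ approximate a prescribed value. First I would put the eigenvalues of $C_d$ into a transparent form: substituting $a=\tfrac12$, $b=\tfrac{\alpha}{2}$, $c=\tfrac14$ into (\ref{eigs symmetric circulant}) and using $\cos(2\theta)=2\cos^2\theta-1$, the $j$-th eigenvalue equals
\[
\tfrac12+\alpha\cos\theta_j+\tfrac12\cos(2\theta_j)=\cos^2\theta_j+\alpha\cos\theta_j=\Big(\cos\theta_j+\tfrac{\alpha}{2}\Big)^2-\tfrac{\alpha^2}{4},\qquad j=0,\dots,d.
\]
In particular every eigenvalue of $C_d$ is $\ge -\alpha^2/4$, hence $\lambda_3(C_d)\ge -\alpha^2/4$; this already gives one side of the claimed estimate (with room to spare).

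For the reverse inequality, since $\alpha\in[-2,2]$ we may fix an angle $\theta^\ast$ with $\cos\theta^\ast=-\alpha/2$. The points $\theta_0,\dots,\theta_d$ are equally spaced modulo $2\pi$ with gap $2\pi/(d+1)$, so the index $j_0$ for which $\theta_{j_0}$ is closest to $\theta^\ast$ in angular distance satisfies $\mathrm{dist}(\theta_{j_0},\theta^\ast)\le \pi/(d+1)$, and its two neighbours $\theta_{j_0\pm1}$ (indices taken mod $d+1$) lie within angular distance $3\pi/(d+1)$ of $\theta^\ast$. Using that $|\cos u-\cos v|$ is at most the angular distance between $u$ and $v$ (the derivative of $\cos$ is bounded by $1$) together with $\cos\theta^\ast=-\alpha/2$, each of the three corresponding values $(\cos\theta_j+\alpha/2)^2-\alpha^2/4$ is at most $9\pi^2/(d+1)^2-\alpha^2/4$. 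Since $j_0-1,j_0,j_0+1$ are three distinct residues mod $d+1$ (recall $d\ge5$, so $d+1\ge6$), at least three of the $d+1$ eigenvalues of $C_d$ are $\le 9\pi^2/(d+1)^2-\alpha^2/4$, whence $\lambda_3(C_d)\le 9\pi^2/(d+1)^2-\alpha^2/4$. Combining with the previous paragraph gives $\lambda_3(C_d)=-\alpha^2/4+O(1/d^2)$.

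Finally, to obtain the statement about the Lasserre bound I would combine the identity $\underline{f}^{(d)}_{\mathbf{K}}=\lambda_{\min}(A_d)$ (Lemma \ref{lemsummarize}), the interlacing chain $\lambda_{\min}(A_d)\le\lambda_{\min}(B)\le\lambda_3(C_d)$ established just above, and the trivial bound $\underline{f}^{(d)}_{\mathbf{K}}\ge f_{\min,\bK}=-\alpha^2/4$ (valid since $-\alpha/2\in[-1,1]$, so the vertex of the parabola lies in $\bK$); together these sandwich $\underline{f}^{(d)}_{\mathbf{K}}$ between $-\alpha^2/4$ and $-\alpha^2/4+O(1/d^2)$, which is the assertion. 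I do not expect a real obstacle: the first paragraph is a routine computation, and the only point requiring a bit of care is the bookkeeping in the second paragraph — measuring angle distances modulo $2\pi$ rather than on the line when applying $|\cos u-\cos v|\le|u-v|$, and checking that the three chosen indices are genuinely distinct, which holds as soon as $d+1\ge 3$.
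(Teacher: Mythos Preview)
Your argument is correct and follows essentially the same route as the paper: both proofs use the closed-form circulant spectrum, locate grid angles $\theta_j$ near the minimizer $\theta^\ast=\arccos(-\alpha/2)$, and bound the resulting eigenvalues by $-\alpha^2/4+O(1/d^2)$. The only cosmetic difference is that you complete the square and use the Lipschitz bound $|\cos u-\cos v|\le|u-v|$, whereas the paper applies Taylor's theorem to $f(\vartheta)=\cos^2\vartheta+\alpha\cos\vartheta$; your version of the counting step (three distinct indices $\Rightarrow$ three eigenvalues with multiplicity $\le$ the bound) is in fact a bit cleaner than the paper's explicit identification of $\lambda_3(C_d)$.
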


\begin{proof}
Setting $\vartheta_j= {2\pi j \over d+1}$ for $j\in\oN$, then by \eqref{eigs symmetric circulant} the  eigenvalues of the matrix $C_d$ are
the scalars
$${1\over 2} +\alpha \cos (\vartheta_j) +{1\over 2} \cos(2\vartheta_j) = \cos^2(\vartheta_j)+\alpha \cos( \vartheta_j )\quad \text{ for } 0\le j\le d.$$
 Consider the function $f(\vartheta)= \cos^2(\vartheta)+\alpha \cos (\vartheta)$ for $\vartheta\in [0,2\pi]$. Then $f$ satisfies: $f(\vartheta)=f(2\pi-\vartheta)$, and its minimum value is equal to $-\alpha^2/4$, which is attained at $\vartheta= \arccos(-\alpha/2)\in [0,\pi]$ and $2\pi-\vartheta$.
  Let $j$ be the integer  such that $\vartheta_j \le \vartheta < \vartheta_{j+1}.$
 Then the smallest eigenvalue of $C_d$ is $\lambda_{\min}(C_d)=\min\{f(\vartheta_j), f(\vartheta_{j+1})\}$ and its  third smallest eigenvalue  is  given by $\lambda_3(C_d)= \min\{f(\vartheta_{j-1}), f(\vartheta_{j+1})\}$ if $\lambda_{\min}(C_d)= f(\vartheta_j)$,
 and
  $\lambda_3(C_d)= \min\{f(\vartheta_{j}), f(\vartheta_{j+2})\}$ if $\lambda_{\min}(C_d)= f(\vartheta_{j+1})$.
 Therefore, $\lambda_3(C_d)=f(\vartheta_k)$ for some $k\in \{ j-1,j,j+1,j+2\}$.

 Using Taylor theorem (and the fact that $f'(\vartheta)=0$) we can conclude that
 $$\lambda_3(C_d)+{\alpha^2\over 4}= f(\vartheta_k)-f(\vartheta)= {1\over 2}f''(\xi)(\vartheta-\vartheta_k)^2,$$
 for some scalar $\xi \in (\vartheta,\vartheta_k)$ (or $(\vartheta_k,\vartheta)$).
 Finally, $f''(\xi)= -2\cos(\xi) -\alpha \cos (\xi)$ and thus we have $|f''(\xi)|\le 2+|\alpha|$.
 Also $|\vartheta-\vartheta_k|\le |\vartheta_{j+2}-\vartheta_{j-1}| = {6\pi\over d+1}.$ The claimed result now follows directly. \end{proof}


\subsection{The general case}

As a direct application we can also deal with the case when $f$ is multivariate quadratic and separable.

 \begin{corollary} \label{corquadsep}
 Consider the box $\bK=[-1,1]^n$ and a multivariate polynomial of the form
 $f(x)=\sum_{i=1}^n x_i^2 +\alpha_i x_i$ for some scalars $\alpha_i\in \oR$.
 Then we have
 $\underline{f}^{(d)}_{\mathbf{K}}-f_{\min,\bK}=O(1/d^2)$.
 \end{corollary}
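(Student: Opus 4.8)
The plan is to reduce the separable multivariate case directly to the univariate quadratic case of Lemma~\ref{lemquaduni}, exploiting that both $f(x)=\sum_{i=1}^n(x_i^2+\alpha_i x_i)=\sum_{i=1}^n h_i(x_i)$ and the measure $d\mu(x)=\prod_{i=1}^n w_{-1/2,-1/2}(x_i)\,dx_i$ decompose coordinatewise. Since $\underline{f}^{(d)}_{\bK}\ge f_{\min,\bK}$ for every $d$, and $f_{\min,\bK}=\sum_{i=1}^n\gamma_i$ with $\gamma_i:=\min_{t\in[-1,1]}(t^2+\alpha_i t)$ by separability, it suffices to construct, for each (large) $d$, an admissible sum-of-squares density $\sigma$ of degree at most $2d$ with $\int_\bK\sigma\,d\mu=1$ and $\int_\bK f\sigma\,d\mu\le f_{\min,\bK}+O(1/d^2)$.

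The construction splits the degree budget among the coordinates. Set $d':=\lfloor d/n\rfloor$. For each $i\in[n]$ apply the univariate analysis to $h_i(t)=t^2+\alpha_i t$ on $[-1,1]$ with weight $w_{-1/2,-1/2}$: Lemma~\ref{lemquaduni} when $\alpha_i\in[-2,2]$, and the linear-majorant argument preceding it together with Corollary~\ref{corboundn} when $\alpha_i\notin(-2,2)$. (The rescaling of the measure by $2/\pi$ used in the proof of Lemma~\ref{lemquaduni} is immaterial, since the bounds $\underline{f}^{(d)}$ are invariant under multiplying $\mu$ by a positive constant.) This gives $\underline{h_i}^{(d')}_{[-1,1]}-\gamma_i=O(1/(d')^2)$ and, by Lemma~\ref{lemsummarize} and the remark following it, a univariate sum-of-squares polynomial $\sigma_i$ of degree at most $2d'$ (the square of an eigenvector of the associated matrix for its smallest eigenvalue) with $\int_{-1}^1\sigma_i(t)w_{-1/2,-1/2}(t)\,dt=1$ and $\int_{-1}^1 h_i(t)\sigma_i(t)w_{-1/2,-1/2}(t)\,dt=\underline{h_i}^{(d')}_{[-1,1]}$. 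Now set $\sigma(x):=\prod_{i=1}^n\sigma_i(x_i)$: being a product of univariate sums of squares it is itself a sum of squares, $\deg\sigma\le 2nd'\le 2d$, so $\sigma$ is admissible at level $d$, and by Fubini $\int_\bK\sigma\,d\mu=\prod_{i=1}^n\int_{-1}^1\sigma_i(t)w_{-1/2,-1/2}(t)\,dt=1$.

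It then remains to evaluate the objective. By Fubini and the normalizations,
\[
\int_\bK f\,\sigma\,d\mu=\sum_{i=1}^n\Bigl(\int_{-1}^1 h_i(t)\sigma_i(t)w_{-1/2,-1/2}(t)\,dt\Bigr)\prod_{j\ne i}\Bigl(\int_{-1}^1\sigma_j(t)w_{-1/2,-1/2}(t)\,dt\Bigr)=\sum_{i=1}^n\underline{h_i}^{(d')}_{[-1,1]}.
\]
Hence $\underline{f}^{(d)}_{\bK}\le\sum_{i=1}^n\underline{h_i}^{(d')}_{[-1,1]}=\sum_{i=1}^n\bigl(\gamma_i+O(1/(d')^2)\bigr)=f_{\min,\bK}+O(n^3/d^2)$, which is $f_{\min,\bK}+O(1/d^2)$ since $n$ is a fixed constant; together with $\underline{f}^{(d)}_{\bK}\ge f_{\min,\bK}$ this proves the corollary.

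I expect the only genuine point to watch to be the degree bookkeeping: one cannot simply tensor the degree-$2d$ optimal univariate densities, since their product has degree $2nd$, so the budget must be split as $d'=\lfloor d/n\rfloor$, and one must note that this merely inflates the hidden constant by a fixed power of $n$, which is harmless because $n$ is a constant. A secondary and routine matter is to handle $\alpha_i$ outside $[-2,2]$ via the linear upper estimator $g(t)=\alpha_i t+1$ as in the paragraph preceding Lemma~\ref{lemquaduni}, rather than through Lemma~\ref{lemquaduni} itself; all remaining steps are direct applications of Fubini's theorem.
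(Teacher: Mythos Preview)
Your proof is correct and follows essentially the same approach as the paper: take optimal univariate sum-of-squares densities for each coordinate factor $h_i$, form their product, and use Fubini together with the univariate quadratic result. The only cosmetic difference is the direction of the degree bookkeeping---the paper bounds $\underline{f}^{(nd)}_{\bK}$ using degree-$2d$ univariate densities and then notes this implies the rate for $\underline{f}^{(d)}_{\bK}$, whereas you split the budget as $d'=\lfloor d/n\rfloor$; the two formulations are equivalent.
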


 \begin{proof}
 The polynomial $f$ is separable: $f(x)=\sum_{i=1}^n f_i(x_i)$, after setting $f_i(x_i)= x_i^2+\alpha_i x_i.$
Hence its minimum over the box $\bK$ is $f_{\min,\bK}=\sum_{i=1}^n (f_i)_{\min,[-1,1]}$.
 Suppose $\sigma_i\in \Sigma[x_i]_d$ is an optimal density function for the bound $\underline{f_i}^{(d)}_{[-1,1]}$ and consider the polynomial
 $\sigma(x)=\prod_{i=1}^n \sigma_i(x_i) \in \Sigma[x]_{nd},$ which is a density function over $\bK$. Then we have
 $$\underline{f}^{(nd)}_{\mathbf{K}} -f_{\min,\bK} \le \int_{\bK} f(x)\sigma(x)d\mu(x) =\sum_{i=1}^n \left(\int_{-1}^1 f_i(x_i) d\mu(x_i) -(f_i)_{\min,[-1,1]}\right) =O(1/d^2),$$
 where we use Lemma \ref{lemquaduni} for the last estimate.
  This implies the claimed convergence rate for the bounds $\underline{f}^{(d)}_{\mathbf{K}}$.
 \end{proof}

Assume now $f$ is  an arbitrary polynomial and let $a\in \bK=[-1,1]^n$ be a minimizer of $f$ over $\bK$.
 Consider the following quadratic polynomial
 $$g(x) = f(a)+ \nabla f(a)^T (x-a) + C_f \|x-a\|_2^2,$$
 where we set $C_f= \max_{x\in \bK} \|\nabla^2 f(x)\|_2$. By Taylor's theorem we know that $f(x)\le g(x)$ for all $x\in \bK$ and that the minimum value of $g(x)$ over $\bK$ is  $g_{\min,\bK}=f(a)=f_{\min,\bK}$.
 This implies
 $$\underline{f}^{(d)}_{\mathbf{K}}-f_{\min,\bK}\le  \underline{g}^{(d)}_{\mathbf{K}}-g_{\min,\bK}  =O(1/d^2),$$
 where we use Corollary \ref{corquadsep} for the last estimate.
 This concludes the proof of Theorem \ref{theoLasrate}.

\section{Concluding remarks}
Some other hierarchical upper bounds for polynomial optimization over the hypercube have been
investigated in the literature. In particular, bounds are proposed in \cite{KLLS MOR}, that rely on selecting
 density functions arising from beta distributions:
 \[
f^H_d:=\,\displaystyle\min_{(\alpha,\beta)\in {N}(2n,d)}\:\frac{\displaystyle\int_\bK f(x)\,x^\alpha(1-x)^\beta\,dx}
{\displaystyle\int_\bK x^\alpha(1-x)^\beta\,dx},
\]
where, $\bK = [-1,1]^n$, and $(1-x)^\beta = \prod_{i=1}^n (1-x_i)^{\beta_i}$ for $\beta \in \mathbb{N}^n$.
 These bounds can be computed via elementary operations only and
 their rate of convergence is  $f^H_d-f_{\min,\bK}= O(1/\sqrt d)$ (or $O(1/d)$ for quadratic polynomials with rational data).

 Other hierarchies involve
  selecting discrete measures. They rely on polynomial evaluations at rational grid points \cite{KL SIOPT} or at polynomial
   meshes like Chebyshev grids \cite{PV OL}.
   The grids in \cite{PV OL} are given by the Chebyshev-Lobatto points:
   \[
   C_d := \left\{\cos\left(\frac{j\pi}{d}\right)  \right\} \quad j = 0,\ldots,d.
   \]
   In particular the authors of \cite{PV OL} show that $\min_{x \in C_d^n} f(x)  -f_{\min,\bK} = O\left(\frac{1}{d^2}\right)$, where
   $$ C_d^n= C_d \times \cdots \times C_d \subset [-1,1]^n.$$
   Note that $|C_d^n| = (d+1)^n$, which is of course exponential in $n$ even for fixed $d$.

The same {$O\left(\frac{1}{d^2}\right)$}  rate of convergence was shown in  \cite{KL SIOPT} for the regular grid ({using} $d+1$ evenly spaced points).
{We also refer to the recent work \cite{PV18} where polynomial meshes are investigated for polynomial optimization over general convex bodies.}

Thus the Lasserre bound $\underline{f}^{(d)}_{\mathbf{K}}$ has the same $O\left(\frac{1}{d^2}\right)$ asymptotic rate of convergence as the grid searches, but with the advantage
that the computation may be done in polynomial time for fixed $d$.

Of course, the problem studied in this paper falls in the general framework of bound-constrained global optimization problems, and many other
algorithms are available for such problems; a recent survey is given in the thesis \cite{Pal thesis}. The point is that the

 methods we studied in this paper allow analysis of the convergence rate to the global minimum.

We conclude with some unresolved questions:
\begin{itemize}
\item
Does the $O\left(\frac{1}{d^2}\right)$ rate of convergence still hold for the Lasserre bounds if $\bK$ is a general convex body?
(The best known result is the $O(1/d)$ rate from \cite{DKL MOR}.)
\item
What is the precise influence of the choice of reference measure $\mu$ in \eqref{fminkreform2} on the convergence rate?
\item
Is is possible to show a `saturation' result for the Lasserre bounds of the type:
\[
\underline{f}^{(d)}_{\mathbf{K}} - f_{\min,\bK} = o\left( \frac{1}{d^2}\right) \Longleftrightarrow  \mbox{ $f$ is a constant polynomial?}
\]
In other words, is $O(1/d^2)$ the fastest possible convergence rate for nonconstant polynomials?
\end{itemize}

\vspace*{0.5cm}\noindent
{\bf Acknowledgements.}
The authors would like to thank Jean-Bernard Lasserre for useful discussions.

 \end{document}